\documentclass{IEEEtran}
\usepackage{cite}
\usepackage{amsmath,amssymb,amsfonts}
\usepackage{algorithmic}
\usepackage{algorithm}
\usepackage{array}
\usepackage{color}
\usepackage[caption=false,font=normalsize,labelfont=sf,textfont=sf]{subfig}
\usepackage{textcomp}
\usepackage{amsmath}  
\usepackage{amsthm}   %
\usepackage{graphicx} 
\usepackage{epsfig}   
\usepackage{stfloats}
\usepackage{url}
\usepackage{verbatim}
\usepackage{lmodern} 
\usepackage{graphicx}
\usepackage{cite}

\newtheorem{lemma}{Lemma}  
\newtheorem{remark}{Remark}  
\newtheorem{theorem}{Theorem}

\allowdisplaybreaks
\def\BibTeX{{\rm B\kern-.05em{\sc i\kern-.025em b}\kern-.08em
    T\kern-.1667em\lower.7ex\hbox{E}\kern-.125emX}}
\begin{document}
\title{Distributed Circumferential Coverage Control in Non-Convex Annulus Environments}

\author{Chao Zhai~\thanks{Chao Zhai is with School of Automation, China University of Geosciences, Wuhan 430074, China, and with Hubei Key Laboratory of Advanced Control and Intelligent Automation for Complex Systems and also with Engineering Research Center of Intelligent Technology for Geo-Exploration, Ministry of Education. (e-mail: zhaichao@amss.ac.cn). The project was supported by the ``CUG Scholar" Scientific Research Funds at China University of Geosciences (Wuhan) (Project No.2020138)}}

\maketitle

\begin{abstract}
It has long been a prominent challenge in multi-agent systems to achieve distributed coverage of non-convex annulus environments while ensuring workload equalization among agents. To address this challenge, a distributed circumferential coverage control formulation is developed in this note by constructing a Riemannian metric for the navigation in the non-convex subregion while avoiding collisions with the region boundary. In addition, a distributed partition law is designed to balance the workload on the entire coverage region by endowing each agent with a virtual partition bar that slides along the inner boundary of coverage region. Theoretical analysis is conducted to ensure the exponential convergence of workload partition and asymptotic convergence of each agent towards the local optimum in its subregion. Finally, a case study is presented to demonstrate the effectiveness of the proposed coverage control approach.  
\end{abstract}

\textbf{Keywords}: Distributed control, multi-agent systems, non-convex environment, coverage control.

\section{Introduction}\label{sec:introduction}
As a key coordination strategy to deploy a group of agents in the concerned environment, multi-agent coverage control has found wide applications in various fields such as disaster management~\cite{ah24}, structural health monitoring~\cite{di24}, border security~\cite{zre21} and missile interception~\cite{zhai16}. In practice, environmental uncertainties and non-convexity of coverage region make it a challenging task for the distributed implementation of coverage missions~\cite{zhai21}.

Region partition provides a powerful framework to resolve  the distributed coverage issue of uncertain environment and efficient task allocation with the aid of divide-and-conquer scheme, 
as each agent only needs to fulfill the coverage task within its subregion. As a classic partition strategy, Voronoi partition enables to divide the entire coverage region into multiple subregions in a distributed manner~\cite{cor04}, with the benefits of linking its local optima to the subregion centroid for a certain coverage performance index. A plenty of its variants have also been proposed to allow for unknown environment~\cite{man24}, nonlinear dynamics of robots~\cite{enr08}, limited sensing capability~\cite{prat22}, and safety certificates~\cite{wu25}. 
Note that workload equalization is of great significance as it represents a balanced workload distribution among agents, which enables multi-agent system to efficiently complete the collective task. 
By introducing the generalized Voronoi partition, \cite{cor10} formulates this type of multi-agent coverage as area-constrained locational optimization problem and designs a Jacobi algorithm to find the weight assignment that satisfies the area constraints.
Nevertheless, they fail to achieve multi-agent coverage in non-convex region while simultaneously equalizing the workload among agents. 
As a result, a sectorial coverage control formulation is developed to address the coverage problem of star-shaped non-convex region by introducing the rotary pointers~\cite{zhai23}, which guarantees the exponential convergence of workload partition. 
The above coverage control strategy can be successfully extended to 3D non-convex environments~\cite{zau25}. In addition, \cite{zou25} proposes a hierarchical coverage control approach for non-holonomic agents to deal with complex poriferous environments by integrating Voronoi partition with pointer partition, where the Voronoi partition divides the whole region into multiple subregions based on the holes or obstacles, and the pointer partition further equalizes the workload on the subregion for each agent.

However, existing studies rely on a common reference point to implement the sectorial coverage control in star-shaped region, which inevitably restricts the distributed implementation of multi-agent coverage algorithm. In addition, the local control law for steering the agent towards the target is only applied to the convex subregion, which is infeasible in most practical scenarios. For the aforementioned reason, this note aims to develop a distributed coverage control formulation of multi-agent systems for non-convex annulus region with the guarantee of workload balance among subregions. 
In brief, the core contributions of this work are summarized as follows.
\begin{enumerate}
\item Propose a distributed coverage formulation for multi-agent deployment on non-convex annulus environment, which allows to deal with non-star-shaped non-convex region without predetermining a common reference point. 
\item Design a distributed partition law by sliding the partition bar along the inner boundary of coverage region, which enables the fully distributed implementation of coverage control strategy.
\item Construct a Riemannian metric for the coverage region, which allows to navigate the agent in the non-convex subregion while avoiding the collision with the region boundary.  
\end{enumerate}

The remainder of this note is outlined as follows. Section~\ref{sec:prob} formulates the coverage control problem for non-convex annulus environment. Section~\ref{sec:tech} provides technical analysis of proposed distributed coverage control algorithm, followed by a case study in Section~\ref{sec:case}. Finally, we draw a conclusion and discuss the future work in Section~\ref{sec:con}.  

{\sl Notation:} Throughout this paper, $\|\cdot\|$ indicates the Euclidean norm, and $\|\cdot\|_g$ denotes the norm induced by the Riemannian metric $g$.

\section{Problem Formulation}\label{sec:prob}
This section formulates the distributed coverage control problem of multi-agent systems in annulus non-convex non-star shaped environments. Consider a team of $N$ mobile agents in the coverage region $\Omega\subset\mathbb{R} ^2$, each governed by single-integrator dynamics
\begin{equation}\label{model}
\dot{\mathbf{p}}_i=\mathbf{u}_i, \quad i\in \mathbb{I}_N,
\end{equation}
where $\mathbf{p}_i\in\mathbb{R}^2$ denotes the position of the $i$-th agent in its local frame, and $\mathbf{u}_i\in\mathbb{R}^2$  is its control input with $\mathbb{I}_N=\{1,2,...,n\}$. 
To describe the monitoring area, we consider a two-dimensional annular region $\Omega$, bounded by two closed smooth boundary curves. 
The inner boundary curve  forms a convex region, while the outer boundary curve encloses 
a star-shaped region~(see Fig.~\ref{ske}). 
Note that the region can be described by $\Omega=\{q\in R^2: h(\mathbf{q})\geq0\}$ with a continuously differentiable function $h(\mathbf{q})$ and $h(\mathbf{q})=0$ if and only if $\mathbf{q}\in\partial\Omega$.
\begin{figure}[t!]
\centering\includegraphics[width=1.6in]{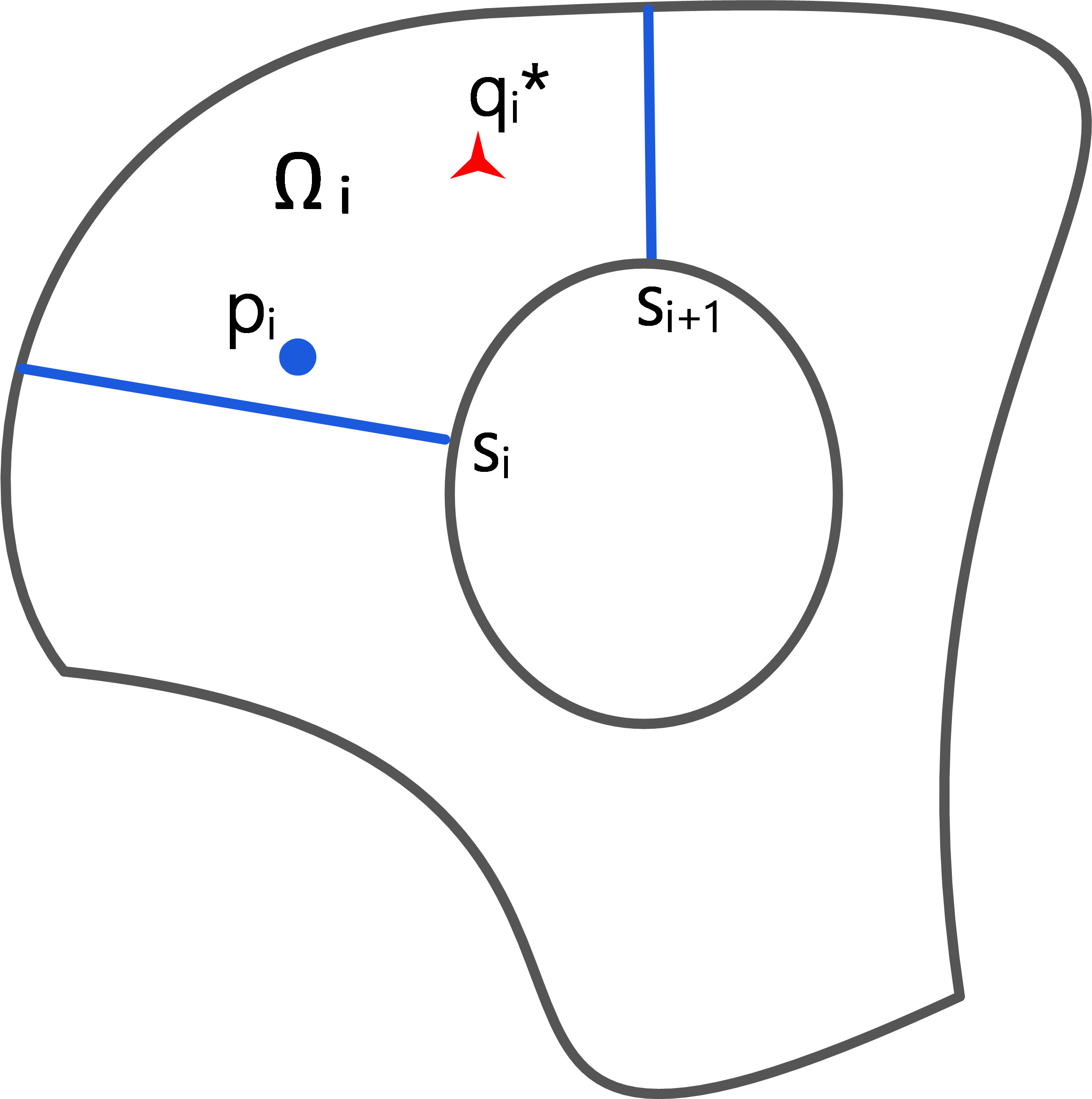}
\caption{\label{ske}The nonconvex annulus coverage region for multi-agent system. The blue lines denote the virtual partition bars, and the blue solid ball stands for the position of agent. The red triangle represents the optimal position for minimizing local coverage cost.}
\end{figure}
Each agent is equipped with a virtual partition bar, which enables to divide the whole region into multiple subregions and assign each agent with one subregion.
The virtual partition bar of the $i$-th agent is described by
\begin{equation}\label{bar}
\xi_i=\{\mathbf{q}\in\Omega: (\mathbf{q}-\mathbf{s}_i)^T\nu_i=0 \}, \quad i \in \mathbb{I}_N,
\end{equation}
where ${\nu}_i$ denotes the unit tangent vector to the inner boundary of coverage region at $\mathbf{s}_i$, and the workload on the \(i\)-th subregion $m_i$ increases along its direction.
The subregion assigned to the \(i\)-th agent is denoted 
by $\Omega_i$, which is bounded by  \(l_{\text{in}}\), 
\(l_{\text{out}}\), and two partition bars \(\xi_i\) and \(\xi_{i+1}\).
A workload density function \(\rho: \Omega\rightarrow \mathbb{R}^+\), constrained within the bounds \([\underline{\rho}, \bar{\rho}]\), is defined to quantify the spatial distribution of event occurrence probabilities over the domain \(\Omega\). Thus, the workload on the \(i\)-th subregion can be obtained by
\begin{equation}\label{mi}
m_i = \int_{\Omega_i} \rho(\mathbf{q}) d\mathbf{q},  \quad i\in \mathbb{I}_N.
\end{equation} 
In order to ensure workload balance among subregions, 
the partition dynamics of the $i$-th virtual partition bar is designed as follows 
\begin{equation}\label{part}
\dot{\mathbf{s}}_i = -\kappa_s(m_i-m_{i-1})\nu_i, \quad i\in \mathbb{I}_N 
\end{equation}
where \(\nu_i\in\mathbb{R}^2\) denotes the unit tangent vector to the inner boundary at \(\mathbf{s}_i\). Inspired by the work in~\cite{zhai23}, the performance index for multi-agent coverage 
is given by a smooth function
\begin{equation}\label{cost}
J(\bf{p},\mathbf{s}) =\sum\limits_{i=1}^N\int_{{\Omega_i}(\mathbf{s})}f(p_i,q)\rho(q)dq  
\end{equation}
with \(\mathbf{p}=(\mathbf{p}_1,\mathbf{p}_2,\ldots, \mathbf{p}_N)^T\) and \(\mathbf{s}=(\mathbf{s}_1, \mathbf{s}_2,\ldots,\mathbf{s}_N)^T\).
It is noteworthy that $f(\mathbf{p}_i,\mathbf{q})$ is used to quantify the cost of moving from $\mathbf{p}_i$ to $\mathbf{q}$ on the subregion \(\Omega_i\).
Essentially, this work aims to develop a distributed control algorithm for multi-agent system \eqref{model} to solve the following optimization problem
\begin{equation}
\begin{aligned}
& \min_{\mathbf{p},\mathbf{s}} 
J(\mathbf{p},\mathbf{s}) \\
& \text{s.t.} \quad m_i = m_j  \quad\forall i, j \in \mathbb{I}_N .
\end{aligned}
\end{equation}

\section{Technical Analysis}\label{sec:tech}
This section provides theoretical results of distributed coverage control algorithm for non-convex annulus environments. First of all, some key lemmas are presented below.

\subsection{Key lemmas}

\begin{lemma}
$$
\frac{\partial m_i}{\partial\mathbf{s}_i}\nu_i=\int_{\partial\Omega_{i-1}\cap\partial\Omega_i}\rho(\mathbf{q})d\mathbf{q},\quad \forall i\in \mathbb{I}_N.
$$
\end{lemma}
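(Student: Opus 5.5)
The plan is to compute the directional derivative $\frac{\partial m_i}{\partial\mathbf{s}_i}\nu_i$ directly from the definition \eqref{mi}. I perturb $\mathbf{s}_i$ to $\mathbf{s}_i+\varepsilon\nu_i$ and evaluate $\lim_{\varepsilon\to0}\bigl(m_i(\mathbf{s}_i+\varepsilon\nu_i)-m_i(\mathbf{s}_i)\bigr)/\varepsilon$.

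\textbf{Step 1: fix the meaning of the partial derivative.} In \eqref{bar} the bar $\xi_i$ is parametrized by the point $\mathbf{s}_i$ and the direction $\nu_i$. I take the partial derivative with respect to $\mathbf{s}_i$ with $\nu_i$ held as a parameter, as the notation $\frac{\partial m_i}{\partial \mathbf{s}_i}$ suggests. Under this perturbation the line $(\mathbf{q}-\mathbf{s}_i)^T\nu_i=0$ becomes $(\mathbf{q}-\mathbf{s}_i)^T\nu_i=\varepsilon$. This is a rigid translation of the bar by $\varepsilon$ along its own unit normal $\nu_i$, so every point of $\xi_i$ has normal displacement exactly $\varepsilon$. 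This is precisely why the stated formula carries no geometric weight. The bar $\xi_{i+1}$ and the curves $l_{\text{in}}$ and $l_{\text{out}}$ do not depend on $\mathbf{s}_i$. Consequently $\partial\Omega_i$ changes only along $\xi_i=\partial\Omega_{i-1}\cap\partial\Omega_i$.

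\textbf{Step 2: the strip computation.} The symmetric difference between $\Omega_i(\mathbf{s}_i+\varepsilon\nu_i)$ and $\Omega_i(\mathbf{s}_i)$ is the strip $\{\mathbf{q}\in\Omega: 0\le(\mathbf{q}-\mathbf{s}_i)^T\nu_i\le\varepsilon\}$. By the orientation convention stated after \eqref{bar}, $m_i$ increases in the direction $\nu_i$, so the strip is added to $\Omega_i$ (and removed from $\Omega_{i-1}$). I write points of the strip as $\mathbf{q}=\mathbf{y}+t\nu_i$, with $\mathbf{y}\in\xi_i$ and $t\in[0,\varepsilon]$, which has unit Jacobian. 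Fubini then gives
\[
m_i(\mathbf{s}_i+\varepsilon\nu_i)-m_i(\mathbf{s}_i)=\int_0^{\varepsilon}\int_{\xi_i}\rho(\mathbf{y}+t\nu_i)\,d\mathbf{y}\,dt+R_\varepsilon .
\]
Here $R_\varepsilon$ accounts for the small regions near the two endpoints of $\xi_i$, where $l_{\text{in}}$ and $l_{\text{out}}$ cross the strip.

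\textbf{Step 3: pass to the limit.} Dividing by $\varepsilon$ and using continuity of $\rho$, the main term tends to $\int_{\xi_i}\rho(\mathbf{q})\,d\mathbf{q}$. This is the claimed line integral over $\partial\Omega_{i-1}\cap\partial\Omega_i$.

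\textbf{Main obstacle: the endpoint remainder.} The hardest step is showing $R_\varepsilon=o(\varepsilon)$. At the inner endpoint, $\xi_i$ is normal to the smooth curve $l_{\text{in}}$ at $\mathbf{s}_i$. Hence $l_{\text{in}}$ deviates from its tangent line by $O(\varepsilon^2)$ across the strip, and the mismatched region has area $O(\varepsilon^3)$. At the outer endpoint, I will assume $\xi_i$ meets $l_{\text{out}}$ transversally, which is implicit in the star-shaped, smooth-boundary setting. The mismatched region is then a curvilinear triangle with legs $O(\varepsilon)$, so its area is $O(\varepsilon^2)$. Since $\rho\le\bar\rho$, in both cases $|R_\varepsilon|/\varepsilon\to0$, which completes the identity.
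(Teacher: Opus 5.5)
Your proof is correct. It rests on the same principle as the paper's proof, but you derive that principle from first principles where the paper cites it.

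The paper invokes a transport (``divergence theorem'') formula, $\frac{\partial m_i}{\partial\mathbf{s}_i}=\int_{\xi_i}\rho\,\mathbf{n}_i^T\frac{\partial\tau_i}{\partial\mathbf{s}_i}\,d\tau_i$. It then computes the normal speed of the bar: differentiating $\tau_i^T\nu_i=\mathbf{s}_i^T\nu_i$ gives $\frac{\partial\tau_i}{\partial\mathbf{s}_i}\nu_i=\nu_i$, and $\mathbf{n}_i=\nu_i$ on $\xi_i$, so the weight is $1$. That differentiation step silently treats $\nu_i$ as independent of $\mathbf{s}_i$. This is exactly the convention you make explicit in Step 1, so both proofs compute the same quantity. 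You replace the cited formula with a strip decomposition, Fubini with unit Jacobian, and explicit control of the corner regions.

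The paper's route is shorter and more flexible. If $\nu_i$ were allowed to rotate as $\mathbf{s}_i$ slides along $l_{\text{in}}$, one would only need to recompute the normal speed. It becomes $1+\kappa r$ at distance $r$ from $\mathbf{s}_i$ along the bar, with $\kappa$ the curvature of $l_{\text{in}}$ at $\mathbf{s}_i$. Your computation, by contrast, is tailored to a rigid translation.

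Your route, in turn, actually verifies what the paper takes for granted: that the moving corners where $\xi_i$ meets $l_{\text{in}}$ and $l_{\text{out}}$ contribute only $o(\varepsilon)$. Using perpendicularity at the inner endpoint to get $O(\varepsilon^3)$ is a nice touch, though transversality alone would already suffice there.

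A few things to tidy:
\begin{itemize}
\item Transversality at $l_{\text{out}}$ is a genuine extra hypothesis, not a consequence of star-shapedness. The bar is a normal line of $l_{\text{in}}$, not a ray from the star center. The paper needs this hypothesis implicitly as well.
\item Near the outer endpoint, your main term may evaluate $\rho$ at points of the rectangle outside $\Omega$. Either extend $\rho$ continuously there or absorb that piece into $R_\varepsilon$.
\item The symmetric difference is only the part of the strip adjacent to $\xi_i$, not all of $\{\mathbf{q}\in\Omega:0\le(\mathbf{q}-\mathbf{s}_i)^T\nu_i\le\varepsilon\}$.
\item The case $\varepsilon<0$ should be noted as handled symmetrically.
\end{itemize}
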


\begin{proof}
According to the divergence
theorem in differential geometry~\cite{cha84}, the partial time derivative of ${m}_i$ with respect to $\mathbf{s}_i$ is given by
\begin{align*}
\frac{\partial m_i}{\partial \mathbf{s}_i}= \int_{\partial \Omega_{i-1} \cap \partial \Omega_i} \rho(q) \mathbf{n}_i^T \frac{\partial \tau_i}{\partial \mathbf{s}_i} \, d\tau_i, \notag 
\end{align*}
where \(\partial \Omega_i\) denotes the boundary of the region \(\Omega_i\), and 
\(\mathbf{n}_i\)  denotes the unit outward normal vectors on \(\partial \Omega_i\). Thus, this leads to
\begin{equation*}
\begin{split}
\frac{\partial m_i}{\partial\mathbf{s}_i}\nu_i
&=\int_{\partial\Omega_{i-1} \cap \partial\Omega_i}\rho(q)\mathbf{n}_i^T \frac{\partial \tau_i}{\partial\mathbf{s}_i} d\tau_i \cdot \nu_i \\
&=\int_{\partial\Omega_{i-1} \cap \partial\Omega_i}\rho(q)\mathbf{n}_i^T \frac{\partial \tau_i}{\partial\mathbf{s}_i}\nu_i d\tau_i. 
\end{split}
\end{equation*}
It follows from $\tau_i\in\xi_i$ that 
$\tau_i^T \nu_i=(\mathbf{s}_i)^T\nu_i$, which leads to
$$
\frac{\partial \tau_i}{\partial\mathbf{s}_i}\nu_i=\left(\frac{\partial \tau_i^T\nu_i}{\partial\mathbf{s}_i}\right)^T=\left(\frac{\partial (\nu_i)^T\mathbf{s}_i}{\partial\mathbf{s}_i}\right)^T=\nu_i.
$$
Since both $\mathbf{n}_i$ and $\nu_i$ are unit vectors with the same direction, we get
\begin{equation*}
\begin{split}
\frac{\partial m_i}{\partial\mathbf{s}_i}\nu_i
&=\int_{\partial\Omega_{i-1} \cap \partial\Omega_i}\rho(q)\mathbf{n}_i^T\nu_idq  \\
&=\int_{\partial\Omega_{i-1} \cap \partial\Omega_i}\rho(q) dq. 
\end{split}
\end{equation*}
This completes the proof.
\end{proof}

\begin{lemma}\label{lem:part}
Partition dynamics~\eqref{part} ensures workload equalization among all subregions.
\end{lemma}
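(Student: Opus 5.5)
Fix the total workload $M=\int_\Omega\rho(\mathbf{q})d\mathbf{q}$ and set $\bar m=M/N$. The plan is a Lyapunov argument on the vector of workloads $\mathbf{m}=(m_1,\dots,m_N)^T$ under the partition dynamics~\eqref{part}, with cyclic indices ($m_0=m_N$, $\mathbf{s}_{N+1}=\mathbf{s}_1$). The subregions $\Omega_i$ tile $\Omega$ (they are bounded by consecutive bars $\xi_i,\xi_{i+1}$), so $\sum_i m_i=M$ stays constant. It therefore suffices to show that $V=\frac12\sum_{i=1}^N (m_i-\bar m)^2$ decays to zero, ideally exponentially.

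\textbf{Workload dynamics.} The workload $m_i$ depends only on $\mathbf{s}_i$ and $\mathbf{s}_{i+1}$. Hence
\[
\dot m_i=\frac{\partial m_i}{\partial \mathbf{s}_i}\dot{\mathbf{s}}_i+\frac{\partial m_i}{\partial \mathbf{s}_{i+1}}\dot{\mathbf{s}}_{i+1}.
\]
By~\eqref{part}, each $\dot{\mathbf{s}}_j$ is parallel to $\nu_j$. By the preceding Lemma, $\frac{\partial m_i}{\partial\mathbf{s}_i}\nu_i=\int_{\xi_i}\rho\,d\mathbf{q}=:\ell_i>0$, where $\xi_i=\partial\Omega_{i-1}\cap\partial\Omega_i$. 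The bar $\xi_{i+1}$ is shared with $\Omega_{i+1}$ with the opposite orientation, and conservation of total mass gives $\frac{\partial m_i}{\partial\mathbf{s}_{i+1}}\nu_{i+1}=-\ell_{i+1}$. I need to fix the sign convention carefully here: moving $\mathbf{s}_i$ along $\nu_i$ must transfer mass from $\Omega_{i-1}$ to $\Omega_i$, consistent with ``$m_i$ increases along $\nu_i$''. Substituting, I expect
\[
\dot m_i=-\kappa_s\ell_i(m_i-m_{i-1})+\kappa_s\ell_{i+1}(m_{i+1}-m_i).
\]
This is a weighted consensus on a ring, $\dot{\mathbf{m}}=-\kappa_s L(\mathbf{s})\mathbf{m}$, with a symmetric time-varying Laplacian $L$ of the cycle graph whose edge weights are $\ell_i$.

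\textbf{Lyapunov step.} Since $\mathbf{1}^TL=0$, we have $\dot V=-\kappa_s(\mathbf{m}-\bar m\mathbf{1})^TL(\mathbf{m}-\bar m\mathbf{1})=-\kappa_s\sum_i\ell_i(m_i-m_{i-1})^2$. To get exponential convergence I need a uniform lower bound $\ell_i\ge\underline{\ell}>0$. This follows from $\rho\ge\underline\rho$ together with a lower bound on the length of each bar segment between $l_{\rm in}$ and $l_{\rm out}$. The annulus has positive width, since the inner convex curve lies strictly inside the outer one, so each bar segment has length at least that minimal width. Then $\dot V\le-\kappa_s\underline{\ell}\,\lambda_2(L_{C_N})\,2V$, where $\lambda_2(L_{C_N})=2(1-\cos(2\pi/N))$ is the algebraic connectivity of the unweighted ring. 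This gives $V(t)\le V(0)e^{-4\kappa_s\underline\ell(1-\cos(2\pi/N))t}$, so $m_i\to\bar m$ exponentially.

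\textbf{Main obstacle.} The hard step is justifying the uniform bound on $\ell_i$ and the well-posedness of the partition along the whole trajectory. Bars must not cross each other, and each $\xi_i$ must remain a single segment joining the inner and outer boundaries; otherwise the $m_i$ could become ill-defined or negative. For this I would show that each $m_i$ stays positive. At a point where $m_i$ is the minimum, $\dot m_i\ge0$ by the ring equation, so $\min_i m_i$ is nondecreasing and remains positive. This prevents neighbouring bars from colliding. Star-shapedness of the outer boundary, together with convexity of the inner one, would then be invoked to argue that the normal line at $\mathbf{s}_i$ meets $\Omega$ in a segment of bounded-below length. If that geometric claim fails in general, I would fall back on assuming it and keep the consensus argument unchanged.
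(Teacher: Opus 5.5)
Your proposal is correct and follows essentially the paper's argument: the same Lyapunov function $V=\frac12\sum_i(m_i-\bar m)^2$ and the same identity $\dot V=-\kappa_s\sum_i(m_i-m_{i-1})^2\int_{\partial\Omega_{i-1}\cap\partial\Omega_i}\rho(\mathbf{q})d\mathbf{q}$, which you reach by writing $\dot{\mathbf{m}}$ as weighted ring-Laplacian consensus instead of the paper's summation by parts. The paper concludes with a LaSalle-type step ($\dot V=0$ forces $m_i=m_{i-1}$) and defers the exponential rate to Lemma~\ref{lemma3} via $\lambda_{\min}(\mathbf{S})$ from the cited work; you instead get the exponential bound directly from a uniform lower bound on the bar weights and the algebraic connectivity $2(1-\cos(2\pi/N))$ of the cycle, and you also treat well-posedness of the partition (bars not crossing, $\min_i m_i$ nondecreasing and hence positive), which the paper leaves implicit.
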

\begin{proof}
Construct the following Lyapunov function
\begin{equation*}
V = \frac{1}{2} \sum_{i=1}^{N} (m_i-\bar{m})^2.
\end{equation*}
The time derivative of Lyapunov function along~\eqref{part} is given by
\begin{align*}
\dot{V} &= \sum_{i=1}^{N} (m_i - \bar{m}) \dot{m}_i = \sum_{i=1}^{N} m_i \dot{m}_i - \bar{m} \sum_{i=1}^{N} \dot{m}_i = \sum_{i=1}^{N} m_i \dot{m}_i \\
&= \sum_{i=1}^{N} m_i \left(\frac{\partial m_i}{\partial s_i} \dot{s}_i-\frac{\partial m_{i+1}}{\partial s_{i+1}} \dot{s}_{i+1} \right) \\
&= \sum_{i=1}^{N} (m_i - m_{i-1}) \frac{\partial m_i}{\partial \mathbf{s}_i}\dot{\mathbf{s}}_i \\
&=-\kappa_s\sum_{i=1}^{N} (m_i - m_{i-1})^2\frac{\partial m_i}{\partial\mathbf{s}_i}\nu_i \\
&= -\kappa_s\sum_{i=1}^{N} (m_i-m_{i-1})^2
\int_{\partial\Omega_{i-1}\cap\partial\Omega_i}\rho(\mathbf{q})d\mathbf{q} \leq 0.
\end{align*}
It follows from $\dot{V} = 0$ that 
$(m_i-m_{i-1})^2=0$, $\forall~i\in \mathbb{I}_N$.
Therefore, the state of the system will approach the equilibrium point 
as time goes to infinity, which implies $m_1=m_2=\cdots= m_N$.
\end{proof}

\begin{lemma}\label{lemma3}
There exist positive constants  $c_1$ and $c_2$ such that 
$|m_i(t)-m_{i-1}(t)|\leq~c_1e^{-c_2t}$, $\forall t\geq 0$.
\end{lemma}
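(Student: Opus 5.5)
The plan is to strengthen the Lyapunov argument of Lemma~\ref{lem:part} into a differential inequality $\dot V\le -2c_2 V$ for the same function $V=\frac12\sum_{i=1}^N(m_i-\bar m)^2$, where $\bar m$ is the constant average workload. Grönwall's inequality then gives $V(t)\le V(0)e^{-2c_2 t}$. Since $|m_i-m_{i-1}|\le |m_i-\bar m|+|m_{i-1}-\bar m|\le 2\sqrt{2V}$, this yields the claim with $c_1=2\sqrt{2V(0)}$, or any larger constant.

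The derivation of Lemma~\ref{lem:part} already gives
\begin{equation*}
\dot V=-\kappa_s\sum_{i=1}^N (m_i-m_{i-1})^2 L_i,\qquad L_i=\int_{\partial\Omega_{i-1}\cap\partial\Omega_i}\rho(\mathbf q)\,d\mathbf q .
\end{equation*}
The first key step is a uniform positive lower bound $L_i\ge \underline\rho\,\underline{\ell}$. Here $\underline\ell>0$ is the minimal length of a partition bar $\xi_i$ inside $\Omega$. It is positive because the inner boundary is a smooth convex curve, the outer boundary encloses it at positive distance, and $\xi_i$ is the segment through $\mathbf s_i$ orthogonal to $\nu_i$, i.e.\ along the inner normal direction. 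Each such segment must cross the annulus, so its length is at least the minimal gap between the two boundary curves. Compactness of the inner boundary makes this infimum attained and positive.

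With this bound, $\dot V\le -\kappa_s\underline\rho\,\underline\ell\sum_{i=1}^N (m_i-m_{i-1})^2$, indices taken cyclically. The sum is the Laplacian quadratic form $x^T\mathcal L x$ of the undirected cycle graph $C_N$, with $x_i=m_i-\bar m$ and $\sum_i x_i=0$. The Courant--Fischer theorem then gives $\sum_i (x_i-x_{i-1})^2\ge \lambda_2(C_N)\|x\|^2 = 2\lambda_2 V$, where $\lambda_2(C_N)=2-2\cos(2\pi/N)>0$. Consequently
\begin{equation*}
\dot V\le -2\kappa_s\underline\rho\,\underline\ell\,\lambda_2(C_N)\,V,
\end{equation*}
so I take $c_2=\kappa_s\underline\rho\,\underline\ell\,\lambda_2(C_N)$.

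The main obstacle is the uniform lower bound on $L_i$. It requires that the subregions stay well defined along the flow: the bars $\xi_i$ must not cross or leave the annulus, and each shared boundary must remain a whole cross-section of the annulus rather than degenerating. I would handle this by noting that, under the partition ordering, each $m_i$ stays positive along trajectories since $V$ is nonincreasing and bounded by $V(0)$. I would also use the geometric fact that a normal segment from a smooth convex inner curve always spans the annulus. If the annulus is not thick enough for normal segments to avoid intersecting one another, I would add this as a standing assumption, implicit in the well-posedness of the partition.
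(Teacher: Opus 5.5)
Your proof is correct and follows the paper's route: the same $V$, and a uniform lower bound on the bar integrals (the paper's $\zeta$, made explicit by you as $\underline\rho\,\underline\ell$). Where the paper cites Lemma~3.2 of \cite{zhai23} with $\lambda_{\min}(\mathbf{S})/N$, you use a spectral-gap inequality for the cyclic differences with $\lambda_2(C_N)=2-2\cos(2\pi/N)$, and then Gr\"onwall. Your final bound $|m_i-m_{i-1}|\le 2\sqrt{2V}$ is more careful than the paper's $\sqrt{2V}$, which fails for $x=(a,-a,0,\dots,0)$.

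One side remark in your last paragraph is wrong, though it concerns well-posedness, which the paper does not address at all. The bound $V\le V(0)$ only gives $m_i\ge\bar m-\sqrt{2V(0)}$, which can be negative. Positivity of the $m_i$ instead follows from the minimum principle: $\dot m_i=\kappa_s(m_{i-1}-m_i)L_i+\kappa_s(m_{i+1}-m_i)L_{i+1}\ge 0$ whenever $m_i$ is minimal. You also need no thickness assumption, because outward normal rays from distinct points of a convex curve never meet.
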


\begin{proof}
The time derivative of $V$ along \eqref{part} is given by 
\begin{align*}
\dot{V} &= -\kappa \sum_{i=1}^{N} (m_i - m_{i-1})^2 \int_{\partial\Omega_{i-1}\cap\partial\Omega_i}\rho(\mathbf{q})d\mathbf{q} \\
& \leq -\kappa\zeta\sum_{i=1}^{N} (m_i - m_{i-1})^2   
\end{align*}
with 
$$
\zeta=\inf_{\xi_i\in\Omega}\int_{\partial\Omega_{i-1}\cap\partial\Omega_i}\rho(\mathbf{q})d\mathbf{q}>0.
$$ 
It follows from Lemma 2 and Lemma~$3.2$ in~\cite{zhai23} that 
\begin{align*}
\dot{V} &\leq -\kappa \zeta\sum_{i=1}^{N} (m_i - m_{i-1})^2 \leq -\frac{2 \kappa \zeta \lambda_{\min}(\mathbf{S})}{N} V, 
\end{align*}
where $\mathbf{S}$ is a positive definite matrix~\cite{zhai23}. Solving the above differential inequality yields 
\begin{align*}
V(t) \leq V(0) e^{-\frac{2 \kappa \zeta \lambda_{\min}(\mathbf{S})}{N}t} 
\end{align*}
and 
$$
|m_i(t) - m_{i-1}(t)| \leq \sqrt{2V(t)} \leq c_1 e^{-c_2 t} 
$$
with \( c_1 = \sqrt{2V(0)} \) and \( c_2 = \kappa\zeta \lambda_{\min}(\mathbf{S})/{N} \). The proof is thus completed.
\end{proof}

\begin{lemma}\label{lem:conv}
$$
\lim_{t\to \infty}\dot{\mathbf{s}}_i(t)=\mathbf{0},~~\lim_{t\to\infty}\mathbf{s}_i(t)=\mathbf{s}_i^{*},~~\forall~i\in \mathbb{I}_N.
$$
\end{lemma}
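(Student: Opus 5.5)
The plan is to read both claims off Lemma~\ref{lemma3}, using only the partition dynamics~\eqref{part} and the fact that $\nu_i$ is a unit vector. No new Lyapunov argument is needed.

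\emph{Vanishing velocity.} Since $\|\nu_i\|=1$, the dynamics~\eqref{part} give
\begin{equation*}
\|\dot{\mathbf{s}}_i(t)\| = \kappa_s |m_i(t)-m_{i-1}(t)| \leq \kappa_s c_1 e^{-c_2 t}, \quad \forall t\geq 0,
\end{equation*}
where $c_1,c_2>0$ are the constants of Lemma~\ref{lemma3}. This bound tends to zero, so $\lim_{t\to\infty}\dot{\mathbf{s}}_i(t)=\mathbf{0}$ for every $i\in\mathbb{I}_N$.

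\emph{Convergence of the bar positions.} Vanishing velocity alone does not imply convergence, so we use integrability of the velocity. The exponential bound gives
\begin{equation*}
\int_0^\infty \|\dot{\mathbf{s}}_i(\tau)\|\,d\tau \leq \frac{\kappa_s c_1}{c_2} < \infty .
\end{equation*}
For any $t_2>t_1\geq 0$,
\begin{equation*}
\|\mathbf{s}_i(t_2)-\mathbf{s}_i(t_1)\| \leq \int_{t_1}^{t_2}\|\dot{\mathbf{s}}_i(\tau)\|\,d\tau \leq \frac{\kappa_s c_1}{c_2} e^{-c_2 t_1}.
\end{equation*}
The right-hand side tends to zero as $t_1\to\infty$, so $\{\mathbf{s}_i(t)\}$ is Cauchy as $t\to\infty$. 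Completeness of $\mathbb{R}^2$ then gives a limit $\mathbf{s}_i^*$. The inner boundary is a closed curve, hence a closed set, and $\mathbf{s}_i(t)$ stays on it because it moves along the tangent $\nu_i$. Therefore $\mathbf{s}_i^*$ also lies on the inner boundary and defines a valid limiting partition bar. As a by-product, the convergence is exponential:
\begin{equation*}
\|\mathbf{s}_i(t)-\mathbf{s}_i^*\|\leq (\kappa_s c_1/c_2)e^{-c_2 t}.
\end{equation*}

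\emph{Main obstacle.} The substantive step is guaranteeing the hypotheses behind Lemma~\ref{lemma3}, in particular $\zeta>0$ uniformly along the trajectory. This requires that neighbouring bars never collapse onto each other and that the integral of $\rho$ along each shared bar $\partial\Omega_{i-1}\cap\partial\Omega_i$ stays bounded below. I would argue this from $\rho\geq\underline{\rho}>0$ together with a positive lower bound on bar length: the inner boundary is convex and the outer curve encloses it at positive distance, so each bar segment has length at least that gap. I would also use that $V$ is nonincreasing, which keeps each $m_i$ bounded away from zero if it starts so, so the ordering of the bars is preserved. Once this is granted, the remainder is the elementary integrability argument above.
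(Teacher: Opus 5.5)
Your proof is correct and is essentially the paper's argument. Both use Lemma~\ref{lemma3} to bound $\|\dot{\mathbf{s}}_i\|\le\kappa_s c_1e^{-c_2t}$ and conclude from absolute integrability of the velocity; your Cauchy tail estimate replaces the paper's componentwise appeal to improper Riemann integrability, and unlike the paper's display it does not write $\lim_{t\to\infty}s_{i,k}(t)$ before that limit is known to exist. One correction to your side remark: monotonicity of $V$ alone does not keep the $m_i$ away from zero, since with $\bar m=1$ the value of $V$ is smaller at $(0,\tfrac32,\tfrac32)$ than at $(0.1,0.1,2.8)$. What does work is that $\dot m_i=\kappa_s[a_i(m_{i-1}-m_i)+a_{i+1}(m_{i+1}-m_i)]$ with $a_j=\frac{\partial m_j}{\partial\mathbf{s}_j}\nu_j>0$, so $\min_i m_i(t)$ is nondecreasing.
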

\begin{proof}
It follows from Lemma~\ref{lem:part} and Eq.~\eqref{part} that 
$$
\lim_{t\to \infty}\dot{\mathbf{s}}_i(t)=\lim_{t\to \infty} -\kappa_s(m_i-m_{i-1})\nu_i=\mathbf{0}.
$$ 
In light of Lemma~\ref{lemma3}, we have 
$$
 \|\dot{\mathbf{s}}_i\|=\kappa_s|m_i-m_{i-1}|\leq \kappa_s c_1 e^{-c_2 t}.
$$
Let $\mathbf{s}_i=[{s}_{i,1},{s}_{i,2}]^T$, and for $k\in\{1,2\}$, we get 
\begin{align*}
\lim_{t \to \infty} s_{i,k}(t)-s_{i,k}(0) 
&= \int_0^{+\infty} \dot{s}_{i,k}(t) dt \\
&\leq \int_0^{+\infty} \|\dot{\mathbf{s}}_i(t)\| dt \\
&\leq \kappa_s  c_1 \int_0^{+\infty} e^{-c_2 t} dt = \frac{\kappa_s c_1}{c_2} < +\infty.
\end{align*}
According to Theorem 10.33 in~\cite{apo74}, 
$\dot{s}_{i,k}(t)$, $k\in\{1,2\}$ is improper Riemann-integrable on \([0, +\infty)\), and thus the improper integral 
$\int_0^{+\infty}\dot{s}_{i,k}(t) dt$
exists, which implies the existence of 
$\lim_{t \to \infty}{s}_{i,k}(t)$
due to the following equality 
$$
\lim_{t \to \infty} {s}_{i,k}(t) = {s}_{i,k}(0) + \int_0^{+\infty} \dot{s}_{i,k}(t) dt.
$$
Therefore, for any \(i \in \mathbb{I}_N\), \(\mathbf{s}_i(t)\) converges to the constant vector \(s_i^*\) as time goes to the infinity.
\end{proof}
Let $\mathbf{q}_i^{*}$ denote the position in the $i$-th sunregion that minimizes the local coverage cost in the local frame of the $i$-th agent. As a result, one has
$\mathbf{q}_i^{*}=\arg\inf_{\mathbf{q}\in \Pi_i}J_i$ with 
$$
J_i=\int_{\Omega_i(s)}f(\mathbf{p}_i,\mathbf{q}),\rho(\mathbf{q})d\mathbf{q}
$$ 
and 
$$
\Pi_i=\left\{\mathbf{p}_i\in\Omega_i|\nabla_{p_i}J=0\right\}.
$$
Since $\mathbf{q}_i^{*}$ is the solution to $\nabla_{p_i}J=\mathbf{0}$, it is an interior point in the subregion $\Omega_i$.
\begin{lemma}
If $\mathbf{H}_{J,p_i}(\mathbf{q}^{*}_i)$ is fully ranked, the following two claims hold. 
\begin{itemize}
\item[1)] $\lim_{t\to+\infty}\mathbf{\dot{q}}_i^{*}(t)=0$, $\forall{i}\in \mathbb{I}_N$.
\item[2)] $\mathbf{\dot{q}}_i^{*}(t)$ is absolutely integrable over time. 
\end{itemize}
\end{lemma}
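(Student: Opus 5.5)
The approach is to use the implicit function theorem. By definition, $\mathbf{q}_i^{*}$ is a zero of the gradient map
$$
\mathbf{F}_i(\mathbf{p}_i,\mathbf{s}) := \nabla_{\mathbf{p}_i}J_i(\mathbf{p}_i,\mathbf{s}) = \int_{\Omega_i(\mathbf{s})}\nabla_{\mathbf{p}_i}f(\mathbf{p}_i,\mathbf{q})\rho(\mathbf{q})\,d\mathbf{q}.
$$
Because $\Omega_i$ depends only on $\mathbf{s}_i$ and $\mathbf{s}_{i+1}$, we have $\nabla_{\mathbf{p}_i}J=\nabla_{\mathbf{p}_i}J_i$, and $\mathbf{F}_i$ depends on $\mathbf{s}$ only through $(\mathbf{s}_i,\mathbf{s}_{i+1})$. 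The Jacobian of $\mathbf{F}_i$ with respect to $\mathbf{p}_i$ is the Hessian $\mathbf{H}_{J,p_i}$. When this Hessian is fully ranked at $\mathbf{q}_i^{*}$, the implicit function theorem gives a locally unique $C^1$ branch $\mathbf{q}_i^{*}=\phi_i(\mathbf{s}_i,\mathbf{s}_{i+1})$. Differentiating $\mathbf{F}_i(\mathbf{q}_i^{*}(t),\mathbf{s}(t))\equiv\mathbf{0}$ in time then yields
$$
\dot{\mathbf{q}}_i^{*} = -\mathbf{H}_{J,p_i}^{-1}(\mathbf{q}_i^{*})\left(\frac{\partial \mathbf{F}_i}{\partial \mathbf{s}_i}\dot{\mathbf{s}}_i+\frac{\partial \mathbf{F}_i}{\partial \mathbf{s}_{i+1}}\dot{\mathbf{s}}_{i+1}\right).
$$

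The second step is to bound the coefficient matrices uniformly in time. For $\partial\mathbf{F}_i/\partial\mathbf{s}_i$, I would apply the same boundary-variation (Leibniz/divergence) argument used in the first key lemma, with $\rho$ replaced by the vector integrand $\nabla_{\mathbf{p}_i}f(\mathbf{p}_i,\mathbf{q})\rho(\mathbf{q})$. This expresses the derivative as an integral over the bar $\xi_i$, and likewise over $\xi_{i+1}$ for $\partial\mathbf{F}_i/\partial\mathbf{s}_{i+1}$. Each integrand is continuous on a compact set: $\Omega$ is compact, $\rho\le\bar\rho$, $f$ is smooth, and the bar lengths are bounded by the diameter of $\Omega$. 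Hence there is a constant $M$ with $\|\partial\mathbf{F}_i/\partial\mathbf{s}_j\|\le M$. In addition, Lemma~\ref{lem:conv} shows that $\mathbf{s}(t)$ converges, so the trajectory $(\mathbf{q}_i^{*}(t),\mathbf{s}(t))$ stays in a compact set. On that set, continuity of the Hessian together with full rank gives a uniform bound $\|\mathbf{H}_{J,p_i}^{-1}\|\le L$.

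With these bounds, both claims follow quickly. Combining them with the rate estimate in the proof of Lemma~\ref{lem:conv}, namely $\|\dot{\mathbf{s}}_j\|\le\kappa_s c_1e^{-c_2t}$ (which rests on Lemma~\ref{lemma3}), we obtain
$$
\|\dot{\mathbf{q}}_i^{*}(t)\|\le 2LM\kappa_s c_1e^{-c_2t}.
$$
This gives claim 1) immediately. It also gives $\int_0^\infty\|\dot{\mathbf{q}}_i^{*}\|\,dt\le 2LM\kappa_s c_1/c_2<\infty$, which is claim 2).

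I expect the main obstacle to be the uniform invertibility of the Hessian. The hypothesis only asserts full rank pointwise at $\mathbf{q}_i^{*}$, but the argument needs a uniform bound on $\mathbf{H}^{-1}$ along the whole trajectory, together with continuity of the selected branch (no switching among minimizers in $\Pi_i$). My first attempt would be to read the hypothesis as holding at every $t$, including at the limit $\mathbf{s}^*$. Continuity and compactness then give a neighbourhood of the limit on which $\|\mathbf{H}^{-1}\|$ is bounded, and since $\mathbf{s}(t)\to\mathbf{s}^*$, the trajectory enters this neighbourhood after some finite time $T$. On the remaining interval $[0,T]$, the integral of $\|\dot{\mathbf{q}}_i^{*}\|$ is finite because the branch is $C^1$ there, so that interval does not affect integrability.
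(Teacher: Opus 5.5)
Your proposal is correct and follows essentially the same route as the paper: differentiate $\nabla_{\mathbf{p}_i}J=\mathbf{0}$ along the trajectory, solve for $\dot{\mathbf{q}}_i^{*}$ through the invertible Hessian, bound $\|\mathbf{H}^{-1}_{J,p_i}\|$ and $\|\partial\nabla_{\mathbf{p}_i}J/\partial\mathbf{s}_j\|$ by constants, and integrate the exponential bound on $\|\dot{\mathbf{s}}_j\|$ inherited from Lemma~\ref{lemma3}. The differences are minor. You derive Claim 1) from the exponential bound, whereas the paper uses $\dot{\mathbf{s}}_j\to\mathbf{0}$. You also justify the uniform bound on $\mathbf{H}^{-1}_{J,p_i}$ and the continuity of the selected branch, which the paper simply asserts as the constant $K_H$.
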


\begin{proof}
For Claim 1), since $\mathbf{q}_i^{*}$ is the solution to the equation $\nabla_{p_i}J=0$, one gets
\begin{equation*}
\begin{split}
\frac{d\nabla_{p_i}J}{dt}
&=\mathbf{H}_{J,p_i}(\mathbf{q}^{*}_i)\mathbf{\dot{q}}_i^{*}+\frac{\partial\nabla_{p_i}J}{\partial{s}_{i}}\mathbf{\dot{s}}_{i}\\
&+\frac{\partial\nabla_{p_i}J}{\partial{s}_{i+1}}\mathbf{\dot{s}}_{i+1}=0.
\end{split}    
\end{equation*}
It follows from Lemma~\ref{lem:conv} that
$$
\lim_{t\to+\infty}\mathbf{\dot{s}}_{i}(t)=\lim_{t\to+\infty}\mathbf{\dot{s}}_{i+1}(t)=0.
$$
Note that both ${\partial\nabla_{p_i}J}/{\partial{s}_{i}}$ and ${\partial\nabla_{p_i}J}/{\partial{s}_{i+1}}$ are bounded in the coverage region, which leads to 
$$
\lim_{t\to+\infty}\mathbf{H}_{J,p_i}(\mathbf{q}^{*}_i)\mathbf{\dot{q}}_i^{*}(t)=0.
$$
Considering that $\mathbf{H}_{J,p_i}(\mathbf{q}^{*}_i)$ is fully ranked, one gets  
$\lim_{t\to+\infty}\mathbf{\dot{q}}_i^{*}(t)=0$.
For Claim 2), since $\mathbf{H}_{J,p_i}(\mathbf{q}^{*}_i)$ is fully ranked, one obtains
$$
\mathbf{\dot{q}}_i^{*}=-\mathbf{H}^{-1}_{J,p_i}\left(\frac{\partial\nabla_{p_i}J}{\partial{s}_{i}}\mathbf{\dot{s}}_{i}
+\frac{\partial\nabla_{p_i}J}{\partial{s}_{i+1}}\mathbf{\dot{s}}_{i+1}\right),
$$
which leads to
\begin{equation*}
\begin{split}
\|\mathbf{\dot{q}}_i^{*}\|
&\leq \|\mathbf{H}^{-1}_{J,p_i}\|\cdot\left\|\frac{\partial\nabla_{p_i}J}{\partial{s}_{i}}\mathbf{\dot{s}}_{i}
+\frac{\partial\nabla_{p_i}J}{\partial{s}_{i+1}}\mathbf{\dot{s}}_{i+1}\right\| \\  
&\leq \|\mathbf{H}^{-1}_{J,p_i}\|\cdot\left\|\frac{\partial\nabla_{p_i}J}{\partial{s}_{i}}\right\|\cdot\|\mathbf{\dot{s}}_{i}\|\\
&+\|\mathbf{H}^{-1}_{J,p_i}\|\cdot\left\|\frac{\partial\nabla_{p_i}J}{\partial{s}_{i+1}}\right\|\cdot\|\mathbf{\dot{s}}_{i+1}\| \\ 
&\leq K_HK_i\|\mathbf{\dot{s}}_{i}\|+K_HK_{i+1}\|\mathbf{\dot{s}}_{i+1}\|
\end{split}    
\end{equation*}
because of
$$
\|\mathbf{H}^{-1}_{J,p_i}\|\leq K_H, \quad \left\|\frac{\partial\nabla_{p_i}J}{\partial{s}_{j}}\right\|\leq K_j,~~j\in\{i,i+1\}.
$$
As a result, one has
\begin{equation*}
\begin{split}
\int_{0}^{+\infty}\|\mathbf{\dot{q}}_i^{*}(t)\|dt
&\leq K_HK_i\int_{0}^{+\infty} \|\mathbf{\dot{s}}_{i}(t)\|dt \\
&+K_HK_{i+1}\int_{0}^{+\infty} \|\mathbf{\dot{s}}_{i+1}(t)\|dt \\
&\leq K_H\left(\frac{\kappa c_{1,i}K_i}{c_{2,i}}+\frac{\kappa c_{1,i+1}K_{i+1}}{c_{2,i+1}}\right)<+\infty,
\end{split}    
\end{equation*}
which completes the proof.
\end{proof}

Construct an energy-like function as follows 
$$
E(t)=\frac{1}{2}d^2_g(\mathbf{p}_i(t),\mathbf{q}^{*}_i(t)),
$$
where $d_g(\mathbf{p}_i(t),\mathbf{q}^{*}_i(t))$ denotes the Riemannian distance between $\mathbf{p}_i(t)$ and $\mathbf{q}^{*}_i(t)$ in the coverage region. 

\begin{lemma}\label{Elim}
$E(t)$ converges to a finite limit as time goes to the infinity.
\end{lemma}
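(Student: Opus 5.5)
The plan is to show that $E(t)$ is bounded below by zero and that its time derivative splits into a nonpositive part plus an absolutely integrable perturbation. A function of this form has a finite limit. The control law for $\mathbf{u}_i$ is not written out above, so I will assume the natural Riemannian gradient law $\mathbf{u}_i=-\kappa_p\,\mathrm{grad}_{\mathbf{p}_i}E=\kappa_p\log_{\mathbf{p}_i}\mathbf{q}_i^{*}$. Here $\log$ is the Riemannian logarithm associated with the metric $g$, which blows up near $\partial\Omega$ so that trajectories stay in the interior. With this law the agent moves along the minimizing geodesic toward $\mathbf{q}_i^{*}$.

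\textbf{Step 1 (derivative of $E$).} Away from the cut locus, the first variation formula for the squared Riemannian distance gives $\nabla_{\mathbf{p}}\tfrac12 d_g^2(\mathbf{p},\mathbf{q})=-\log_{\mathbf{p}}\mathbf{q}$ and $\nabla_{\mathbf{q}}\tfrac12 d_g^2(\mathbf{p},\mathbf{q})=-\log_{\mathbf{q}}\mathbf{p}$, both in the metric $g$. Therefore
\begin{equation*}
\dot E=-\langle \log_{\mathbf{p}_i}\mathbf{q}_i^{*},\dot{\mathbf{p}}_i\rangle_g-\langle \log_{\mathbf{q}_i^{*}}\mathbf{p}_i,\dot{\mathbf{q}}_i^{*}\rangle_g .
\end{equation*}
Substituting the control law, the first term equals $-\kappa_p d_g^2=-2\kappa_p E\le 0$. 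The second term is bounded by $\|\log_{\mathbf{q}_i^{*}}\mathbf{p}_i\|_g\|\dot{\mathbf{q}}_i^{*}\|_g=d_g\|\dot{\mathbf{q}}_i^{*}\|_g$. Because $\mathbf{q}_i^{*}$ is an interior point of $\Omega_i$, and I will argue it stays in a compact subset of the interior, $g$ is uniformly equivalent to the Euclidean metric along its path. This gives $\|\dot{\mathbf{q}}_i^{*}\|_g\le C\|\dot{\mathbf{q}}_i^{*}\|$.

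\textbf{Step 2 (integrable perturbation).} Write $\dot E\le -2\kappa_p E+C\sqrt{2E}\,\|\dot{\mathbf{q}}_i^{*}\|$. Set $r=\sqrt{2E}=d_g$. At points where $r>0$ this becomes $\dot r\le -\kappa_p r+C\|\dot{\mathbf{q}}_i^{*}\|\le C\|\dot{\mathbf{q}}_i^{*}\|$; the points where $r=0$ are handled with a Dini derivative. By Claim 2) of the previous lemma, $\int_0^\infty\|\dot{\mathbf{q}}_i^{*}\|dt<\infty$. Hence $r(t)\le r(0)+C\int_0^\infty\|\dot{\mathbf{q}}_i^{*}\|dt$, so $E$ is bounded. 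It follows that $\dot E=-2\kappa_pE+w(t)$ with $|w(t)|\le C\sqrt{2E}\|\dot{\mathbf{q}}_i^{*}\|$, and $w$ is absolutely integrable. Now define $F(t)=E(t)-\int_0^t w(\tau)d\tau$. Then $\dot F=-2\kappa_pE\le0$, and $F$ is bounded below because $E\ge0$ and $w\in L^1$. So $F$ converges. The integral $\int_0^t w\,d\tau$ also converges, so $E(t)$ has a finite limit. The argument also yields $\int_0^\infty E\,dt<\infty$, which will be useful for the subsequent asymptotic-convergence result.

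\textbf{Main obstacle.} The delicate step is justifying differentiability of $d_g^2$ along the trajectory. This requires $\mathbf{p}_i$ and $\mathbf{q}_i^{*}$ to avoid each other's cut locus, and it requires uniform bounds on the metric. Without these, $\dot E$ is defined only almost everywhere. I would first try to use the construction of $g$ to show that $\Omega_i$ is geodesically convex and has no conjugate points; the barrier form of $g$ near $\partial\Omega$ keeps geodesics interior. Failing that, I would treat $E$ as a locally Lipschitz function and replace the derivative computation by an upper Dini derivative estimate. Step 2 goes through unchanged with this replacement.
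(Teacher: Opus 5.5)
Your argument is essentially the paper's: it splits $\dot E$ into the nonpositive control term plus $\xi(t)=\langle\partial E/\partial\mathbf{q}_i^{*},\dot{\mathbf{q}}_i^{*}\rangle_g$, invokes Claim~2) of the preceding lemma for integrability of $\|\dot{\mathbf{q}}_i^{*}\|$, and applies monotone convergence to $W(t)=E(t)-\int_0^t\xi(\tau)d\tau$. The one difference is in your favor. The paper simply posits a constant $K_q$ bounding $\|\partial E/\partial\mathbf{q}_i^{*}\|_g=d_g(\mathbf{p}_i,\mathbf{q}_i^{*})$, which is not automatic for a barrier metric, whereas your comparison inequality $\dot r\le C\|\dot{\mathbf{q}}_i^{*}\|$ actually proves that bound and also yields $\int_0^\infty E\,dt<\infty$. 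Both arguments still tacitly require $\mathbf{q}_i^{*}(t)$ to remain in a compact subset of the interior of $\Omega$, so that $\|\cdot\|_g$ and $\|\cdot\|$ are comparable along its path.
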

\begin{proof}
The time derivative of $E(t)$ along the trajectory of \eqref{model} and the dynamics of the optimal position is 
given by
\begin{equation*}
\begin{split}
\dot{E}(t)&=\frac{\partial E}{\partial \mathbf{p}_i}\mathbf{\dot{p}}_i
+\frac{\partial E}{\partial \mathbf{q}_i^{*}}\mathbf{\dot{q}}_i^{*} \\
&=-\kappa_p\left\|\frac{\partial E}{\partial \mathbf{p}_i}\right\|_g^2
+\xi(t)
\end{split}    
\end{equation*}
with
$\xi(t)=\langle{\partial E}/{\partial \mathbf{q}_i^{*}},\mathbf{\dot{q}}_i^{*}\rangle_g$. It follows from $\lim_{t\to+\infty}\mathbf{\dot{q}}_i^{*}(t)=0$ that $\lim_{t\to+\infty}\xi(t)=0$. Moreover, one has
\begin{equation*}
\begin{split}
\int_{0}^{+\infty}\xi(t)dt
&\leq\int_{0}^{+\infty}|\xi(t)|dt \\
&\leq \int_{0}^{+\infty}\left\|\frac{\partial E}{\partial \mathbf{q}_i^{*}}\right\|_g \cdot\|\mathbf{\dot{q}}_i^{*}(t)\|_gdt \\
&\leq K_q \int_{0}^{+\infty}\|\mathbf{\dot{q}}_i^{*}(t)\|_gdt
\end{split}    
\end{equation*}
with the positive constant $K_q$. In light of Lemma $6$, one gets
$$
\int_{0}^{+\infty}\xi(t)dt\leq K_q \int_{0}^{+\infty}\|\mathbf{\dot{q}}_i^{*}(t)\|_gdt<+\infty,
$$
which indicate the existence of the integral $\int_{0}^{+\infty}\xi(t)dt$.
For the convenience of analysis, introduce a variable as follows
$$
W(t)=E(t)-\int_{0}^{t}\xi(\tau)d\tau.
$$
Considering that $E(t)\geq0$ and there exists a positive constant $K_{\xi}$ such that
$$
\int_{0}^{t}\xi(\tau)d\tau\leq \int_{0}^{+\infty}|\xi(\tau)|d\tau\leq K_{\xi},
$$
one has $W(t)\geq-K_{\xi}$. Moreover, the time derivative of $W(t)$ is given by 
\begin{equation*}
\begin{split}
\dot{W}(t)=\dot{E}(t)-\xi(t)=-\kappa_p\left\|\frac{\partial E}{\partial \mathbf{p}_i}\right\|_g^2\leq0.
\end{split}    
\end{equation*}
By the Monotone Convergence Theorem, $W(t)$ converges to a finite limit as time goes to the infinity. This leads to the existence of $\lim_{t\to+\infty}E(t)$ due to the existence of the integral $\int_{0}^{+\infty}\xi(t)dt$, which completes the proof.
\end{proof}

\begin{lemma}\label{Euni}
$\dot{E}(t)$ is uniformly continuous with respect to time.
\end{lemma}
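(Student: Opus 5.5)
The plan is to show that $\dot{E}(t)$ has a bounded time derivative. A function with bounded derivative is Lipschitz in $t$, and hence uniformly continuous. From the proof of Lemma~\ref{Elim} we have
\begin{equation*}
\dot{E}(t)=-\kappa_p\left\|\frac{\partial E}{\partial \mathbf{p}_i}\right\|_g^2+\xi(t),\qquad \xi(t)=\left\langle\frac{\partial E}{\partial \mathbf{q}_i^{*}},\mathbf{\dot{q}}_i^{*}\right\rangle_g .
\end{equation*}
We will treat the two terms separately. Each term is a composition of smooth functions of the state $(\mathbf{p}_i,\mathbf{q}_i^{*},\mathbf{s}_i,\mathbf{s}_{i+1})$. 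This state lives in a compact set: the closure of $\Omega$, together with the bounded trajectories of the partition bars. For those trajectories, Lemma~\ref{lem:conv} and the bound $\|\dot{\mathbf{s}}_i\|\le\kappa_s c_1e^{-c_2t}$ give bounded velocities.

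\textbf{Step 1: the first term.} We differentiate $\|\partial E/\partial\mathbf{p}_i\|_g^2$ by the chain rule. This produces second derivatives of $\tfrac12 d_g^2$ and derivatives of the metric coefficients $g$, multiplied by $\dot{\mathbf{p}}_i$ and $\dot{\mathbf{q}}_i^{*}$.

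We will then argue three boundedness facts:
\begin{itemize}
\item The second derivatives of $d_g^2$ and the derivatives of $g$ are bounded. Here we use that the constructed Riemannian metric is smooth on the region the agent visits. The agent stays away from $\partial\Omega$, since $E$ is bounded by Lemma~\ref{Elim} and the metric penalizes approaching the boundary. Also, $d_g^2$ is smooth away from the cut locus, which we assume, as is standard for the local navigation setting.
\item $\dot{\mathbf{p}}_i=-\kappa_p\,\mathrm{grad}_g E$ is bounded, because $\|\partial E/\partial\mathbf{p}_i\|_g=d_g(\mathbf{p}_i,\mathbf{q}_i^{*})=\sqrt{2E}$ and $E$ is bounded.
\item $\dot{\mathbf{q}}_i^{*}$ is bounded, by the estimate $\|\dot{\mathbf{q}}_i^{*}\|\le K_HK_i\|\dot{\mathbf{s}}_i\|+K_HK_{i+1}\|\dot{\mathbf{s}}_{i+1}\|$ from the proof of Lemma~6.
\end{itemize}
Together these show the first term has a bounded time derivative.

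\textbf{Step 2: the term $\xi(t)$.} Its time derivative involves $\ddot{\mathbf{q}}_i^{*}$. We obtain this by differentiating the identity
\begin{equation*}
\mathbf{\dot{q}}_i^{*}=-\mathbf{H}^{-1}_{J,p_i}\left(\frac{\partial\nabla_{p_i}J}{\partial\mathbf{s}_i}\dot{\mathbf{s}}_i+\frac{\partial\nabla_{p_i}J}{\partial\mathbf{s}_{i+1}}\dot{\mathbf{s}}_{i+1}\right).
\end{equation*}
This requires:
\begin{itemize}
\item Boundedness of the derivatives of $\mathbf{H}^{-1}_{J,p_i}$ and of $\partial\nabla_{p_i}J/\partial\mathbf{s}_j$. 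These follow from smoothness of $J$, the full-rank assumption, and compactness; $\|\mathbf{H}^{-1}\|\le K_H$ gives $\frac{d}{dt}\mathbf{H}^{-1}=-\mathbf{H}^{-1}\dot{\mathbf{H}}\mathbf{H}^{-1}$ bounded.
\item Boundedness of $\ddot{\mathbf{s}}_j$. From \eqref{part}, $\ddot{\mathbf{s}}_j=-\kappa_s(\dot m_j-\dot m_{j-1})\nu_j-\kappa_s(m_j-m_{j-1})\dot\nu_j$. Here $\dot m_j$ is bounded via Lemma~1: it is a boundary integral of $\rho\le\bar\rho$ over bars of bounded length, times bounded $\|\dot{\mathbf{s}}\|$. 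Also $\dot\nu_j$ is the curvature of the smooth inner boundary times $\|\dot{\mathbf{s}}_j\|$, hence bounded.
\end{itemize}
Combined with the bounds from Step 1, this gives $\dot\xi$ bounded.

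\textbf{Step 3: conclusion.} Thus $|\ddot{E}(t)|\le M$ for some constant $M$ and all $t\ge0$. By the mean value theorem, $|\dot E(t_1)-\dot E(t_2)|\le M|t_1-t_2|$, which gives uniform continuity.

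The main obstacle is Step 1. It needs regularity of $d_g^2$ and uniform bounds on the metric along the trajectory: avoiding the cut locus and staying a positive distance from $\partial\Omega$, where the metric blows up. The first thing to try is to use the bound on $E$ from Lemma~\ref{Elim} to confine $\mathbf{p}_i$ to a compact sublevel set strictly inside $\Omega$, and then invoke smoothness there.
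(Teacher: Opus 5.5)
Your proposal is correct and follows the same route as the paper. Both arguments bound $\ddot{E}(t)$ by the chain rule, controlling $\ddot{\mathbf{p}}_i$ through the control law, $\dot{\mathbf{q}}_i^{*}$ through the implicit-function formula with $\mathbf{H}^{-1}_{J,p_i}$, and $\dot{\nu}_j$ through the curvature of the inner boundary. You go further than the paper, whose proof stops at ``it suffices to examine the boundedness of $\ddot{\mathbf{q}}_i^{*}$'': you actually bound $\ddot{\mathbf{q}}_i^{*}$ via $\ddot{\mathbf{s}}_j$, Lemma~1 and $\frac{d}{dt}\mathbf{H}^{-1}=-\mathbf{H}^{-1}\dot{\mathbf{H}}\mathbf{H}^{-1}$, and you state openly the regularity assumptions the paper leaves implicit (keeping $\mathbf{p}_i$ off the cut locus of $d_g$ and in a compact set away from $\partial\Omega$, where $g$ blows up). The confinement away from $\partial\Omega$ also needs $\mathbf{q}_i^{*}(t)$ to stay in a compact subset of the interior, which holds because it converges to an interior point; combined with $d_g(\mathbf{p},\mathbf{q})\geq c_h^{-1}|\ln(h(\mathbf{p})/h(\mathbf{q}))|$, this makes your sublevel-set argument work.
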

\begin{proof}
For the uniform continuity of $\dot{E}(t)$, it suffices to demonstrate the boundedness of $\ddot{E}(t)$. Since $\dot{E}(t)$ is given by
$$
\dot{E}(t)=\frac{\partial E}{\partial \mathbf{p}_i}\mathbf{\dot{p}}_i
+\frac{\partial E}{\partial \mathbf{q}_i^{*}}\mathbf{\dot{q}}_i^{*},
$$
one gets
\begin{equation}\label{ddE}
\begin{split}
\ddot{E}(t)&=\left(\frac{\partial^2 E}{\partial \mathbf{p}_i\partial \mathbf{p}_i}\mathbf{\dot{p}}_i+\frac{\partial^2 E}{\partial \mathbf{p}_i\partial \mathbf{q}_i^{*}}\mathbf{\dot{q}}_i^{*}
\right)^T\mathbf{\dot{p}}_i\\
&+\left(\frac{\partial^2 E}{\partial \mathbf{q}_i^{*}\partial \mathbf{q}_i^{*}}\mathbf{\dot{q}}_i^{*}+\frac{\partial^2 E}{\partial \mathbf{q}_i^{*}\partial\mathbf{p}_i}\mathbf{\dot{p}}_i
\right)^T\mathbf{\dot{q}}_i^{*}\\
&+\frac{\partial E}{\partial\mathbf{p}_i}\mathbf{\ddot{p}}_i+\frac{\partial E}{\partial \mathbf{q}_i^{*}}\mathbf{\ddot{q}}_i^{*}. 
\end{split}    
\end{equation}
In addition, considering that
$$
\mathbf{\dot{p}}_i=-{\kappa}_p\frac{{\partial E}}{{\partial {\mathbf{p}_i}}},
$$
one obtains
$$
\mathbf{\ddot{p}}_i=-{\kappa}_p\left(\frac{\partial^2 E}{\partial \mathbf{p}_i\partial \mathbf{p}_i}\mathbf{\dot{p}}_i+\frac{\partial^2 E}{\partial \mathbf{p}_i\partial \mathbf{q}_i^{*}}\mathbf{\dot{q}}_i^{*}
\right).
$$
Similarly, 
$$
\mathbf{\dot{q}}_i^{*}=-\mathbf{H}^{-1}_{J,p_i}\left(\frac{\partial\nabla_{p_i}J}{\partial{s}_{i}}\mathbf{\dot{s}}_{i}
+\frac{\partial\nabla_{p_i}J}{\partial{s}_{i+1}}\mathbf{\dot{s}}_{i+1}\right)
$$
with
$\dot{\mathbf{s}}_j = -\kappa(m_j-m_{j-1})\nu_j$, 
$j\in\{i,i+1\}$. In light of Frenet–Serret formulas~\cite{pra22}, $\dot{\nu}_j$ can be computed by
$$
\dot{\nu}_j=\frac{d\nu_j}{dl}\cdot\frac{dl}{dt}=
\frac{\partial\nu_j}{\partial \mathbf{s}_j}\|\mathbf{\dot{s}}_j\|=\kappa_s(m_j-m_{j-1})\kappa(\mathbf{\dot{s}}_j)\mathbf{n}_j,
$$
where $l$ denotes the arc length, and $\kappa(\mathbf{\dot{s}}_j)$ and $\mathbf{n}_j$ represent the curvature of the inner boundary curve and the unit normal vector at the point $\mathbf{\dot{s}}_j$, respectively. Since $E(t)$ and $J(\mathbf{p},\mathbf{s})$ are both smooth functions with respect to their respective variables, $\mathbf{\dot{p}}_i$, $\mathbf{\dot{q}}_i^{*}$, and $\mathbf{\ddot{p}}_i$ are all bounded on the compact coverage region, which implies that the first three terms in \eqref{ddE} are bounded. For the last term in in \eqref{ddE}, because ${\partial{E}}/{\partial \mathbf{q}_i^{*}}$ is bounded, it suffices to examine the boundedness of $\mathbf{\ddot{q}}_i^{*}$. 
\end{proof}

\subsection{Design of distributed coverage controller}
\label{sec:design-controller}     
In the following, a distributed controller is designed for each agent to avoid the collision with region boundaries or obstacles and remain within its feasible coverage region.
Inspired by~\cite{bha14}, a Riemannian metric  
\(g\) is introduced on each subregion, which induces a length metric $d_g(\mathbf{x},\mathbf{y})$.
This metric guides each agent toward its optimal centroid while avoiding collisions with the boundary of the subregion.
Given the position of the $i$-th agent $\mathbf{p}_i$ and 
the target position $\mathbf{q}$, we can obtain a path 
\(\gamma: [0,1]\to\Omega_i\), where 
\(\gamma(t) =[x^1(t),x^2(t)]^T\in \mathbb{R}^2\), $\gamma(0)=\mathbf{p}_i$ and $\gamma(1)=\mathbf{q}$.  
Let \(L(\gamma)\) denote the path length of \(\gamma\), 
and it can be computed by
$$
L(\gamma)=\int_{0}^{1}\sqrt{\sum_{i,j=1}^{2}g_{ij}\frac{{d{{x}^i}}}{{dt}}\frac{{d{{x}^j}}}{{dt}}}dt,
$$
where the Riemannian metric $g$ is designed as
$g_{ij}=\delta_{ij}/h^2(\mathbf{q})$, $\forall i,j\in\{1,2\}$.
Here, the Kronecker delta $\delta_{ij}$ satisfies $\delta_{ij}=1$ for $i=j$ and $\delta_{ij}=0$, otherwise.
In particular, 
\(d_g(\mathbf{p}_i,\mathbf{q})\) is equal to the infimum of the lengths of all rectifiable paths connecting 
\(\mathbf{p}_i\) and $\mathbf{q}$, that is,
\begin{equation}\label{dl}
d_g(\mathbf{p}_i,\mathbf{q})=\inf_{\gamma\in \Omega} L(\gamma),
\end{equation}
which satisfies the fundamental metric properties. Then the control input for the $i$-th agent in the subregion \(\Omega_i\) is designed as 
\begin{equation}\label{Control_Input}
\mathbf{u}_i=-{\kappa}_p\frac{{\partial E}}{{\partial {\mathbf{p}_i}}},
\end{equation}
where \(\kappa_p\) is a positive gain. 

\begin{theorem}
Multi-agent system (\ref{model}) with partition dynamics (\ref{part}) and control input \eqref{Control_Input} ensures that each agent can reach the optimal configuration of its subregion without getting stuck by the boundaries of coverage region or obstacles. 
\label{Theorem38}
\end{theorem}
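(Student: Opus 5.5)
The plan is to combine the energy-like function $E(t)=\frac{1}{2}d_g^2(\mathbf{p}_i,\mathbf{q}_i^*)$ with Barbalat's lemma to show $\mathbf{p}_i(t)\to\mathbf{q}_i^*(t)$. A separate barrier argument from the metric $g_{ij}=\delta_{ij}/h^2(\mathbf{q})$ will show that no trajectory reaches $\partial\Omega$. The convergence of the partition itself is already supplied by Lemma~\ref{lem:part}, Lemma~\ref{lemma3} and Lemma~\ref{lem:conv}: the bars $\mathbf{s}_i(t)$ converge exponentially to $\mathbf{s}_i^*$, so the subregions $\Omega_i(\mathbf{s})$ settle to fixed limits. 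By the lemma on $\mathbf{q}_i^*$, the targets satisfy $\dot{\mathbf{q}}_i^*\to 0$ with $\dot{\mathbf{q}}_i^*\in L^1$, so $\mathbf{q}_i^*(t)$ also converges to some $\mathbf{q}_i^{*}(\infty)$, which is an interior point of the limiting subregion.

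\textbf{Boundary avoidance.} Since $h$ vanishes exactly on $\partial\Omega$ and is $C^1$, we have $h(\mathbf{q})\le L_h\,\mathrm{dist}(\mathbf{q},\partial\Omega)$. Hence any curve from an interior point to a point at Euclidean distance $\epsilon$ from $\partial\Omega$ has $g$-length at least of order $\int_\epsilon^{\epsilon_0} dr/(L_h r)$, which tends to $+\infty$ as $\epsilon\to 0$. So $d_g(\mathbf{p}_i,\mathbf{q}_i^*)\to\infty$ as $\mathbf{p}_i\to\partial\Omega$ with $\mathbf{q}_i^*$ kept in a compact interior set. The integrated form of the proof of Lemma~\ref{Elim} gives
\begin{equation*}
E(t)\le E(0)+\int_0^{+\infty}|\xi(\tau)|d\tau\le E(0)+K_\xi .
\end{equation*}
Therefore $E(t)$ stays bounded, so $\mathbf{p}_i(t)$ remains in a compact sublevel set strictly inside $\Omega$ and never gets stuck at the boundary. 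The same compactness justifies the uniform bounds on derivatives of $E$ and $J$ that were used in the lemmas.

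\textbf{Convergence.} Lemma~\ref{Elim} gives that $E(t)$ has a finite limit, and Lemma~\ref{Euni} gives that $\dot E$ is uniformly continuous. Barbalat's lemma then yields $\dot E(t)\to 0$. From $\dot E=-\kappa_p\|\partial E/\partial\mathbf{p}_i\|_g^2+\xi(t)$ and $\xi(t)\to 0$, we obtain $\|\partial E/\partial\mathbf{p}_i\|_g\to 0$. We have $\partial E/\partial \mathbf{p}_i=-d_g\,\dot\gamma(0)$ up to metric factors, where $\gamma$ is a minimizing geodesic with unit-speed initial vector. Consequently $\|\partial E/\partial\mathbf{p}_i\|_g=d_g(\mathbf{p}_i,\mathbf{q}_i^*)$ wherever $E$ is differentiable. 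Thus $d_g(\mathbf{p}_i,\mathbf{q}_i^*)\to 0$, which gives $\|\mathbf{p}_i-\mathbf{q}_i^*\|\to 0$ and hence $\mathbf{p}_i\to\mathbf{q}_i^*(\infty)$, the local optimum of $J_i$ in its equal-workload subregion.

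\textbf{Main obstacle.} The hardest step is the smoothness of $E$. The function $d_g^2$ is smooth only away from the cut locus of $\mathbf{q}_i^*$, and in a non-convex annulus the cut locus is nonempty. Differentiability is needed both to identify $\|\partial E/\partial \mathbf{p}_i\|_g$ with $d_g$ and to finish Lemma~\ref{Euni}, namely boundedness of $\ddot{\mathbf{q}}_i^*$. For the latter I would differentiate the implicit formula for $\dot{\mathbf{q}}_i^*$ and use boundedness of $\dot{\mathbf{s}}_j$, of $\dot\nu_j$ (via curvature), and of $\mathbf{H}^{-1}_{J,p_i}$. For the cut locus, my first attempt would be to note that once $\mathbf{q}_i^*$ is nearly stationary, the flow strictly decreases $d_g$. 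I would then try to argue that the trajectory enters a geodesically convex normal ball around $\mathbf{q}_i^*(\infty)$ in finite time. Away from the cut locus, $\partial E/\partial\mathbf{p}_i$ vanishes only at $\mathbf{p}_i=\mathbf{q}_i^*$, so there the argument above applies. If the trajectory cannot be shown to avoid the cut locus, a fallback is to interpret \eqref{Control_Input} using a generalized gradient, for which $E$ still decreases.
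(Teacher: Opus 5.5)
Your convergence half is the paper's argument: Lemma~\ref{Elim}, Lemma~\ref{Euni}, Barbalat's lemma, and the identity $\|\partial E/\partial\mathbf{p}_i\|_g=d_g(\mathbf{p}_i,\mathbf{q}_i^*)$. You also supply something the paper only asserts before its triangle-inequality step, namely that $\mathbf{q}_i^*(t)$ has a limit because $\dot{\mathbf{q}}_i^*$ is integrable.

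The boundary-avoidance half is organized differently. The paper argues by contradiction at a hypothetical contact time $t_c$. It bounds $\int_0^{t_c}\|\dot{\mathbf{p}}_i\|_2/h\,d\tau$ below by $\frac{1}{c_h}\ln\bigl(h(\mathbf{p}_i(0))/h(\mathbf{p}_i(t))\bigr)$, using $-\dot h\le c_h\|\dot{\mathbf{p}}_i\|_2$ directly, so it needs no comparison of $h$ with $\mathrm{dist}(\cdot,\partial\Omega)$. It then appeals to ``finite velocity.'' You use the same logarithmic blow-up to show the $g$-distance to $\partial\Omega$ is infinite, and you confine $\mathbf{p}_i$ to a bounded sublevel set of $E$.

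Your version is the more complete one, for three reasons. First, under the gradient flow $\|\dot{\mathbf{p}}_i\|_g=\kappa_p\sqrt{2E}$, so the paper's ``finite velocity'' is exactly boundedness of $E$, which the paper never states. Finite Euclidean speed would not suffice, since a unit-speed path can reach $\partial\Omega$ in finite time while $\int\|\dot{\mathbf{p}}_i\|_2/h$ diverges. Second, your argument gives a uniform margin from $\partial\Omega$ for all $t\ge 0$, not merely no contact at a finite $t_c$. Third, it provides the compact set on which the constants $K_H$, $K_i$, $K_q$ and the bounds in Lemma~\ref{Euni} are genuinely uniform.

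One caution: getting $E(t)\le E(0)+K_\xi$ from the proof of Lemma~\ref{Elim} is circular as written. The constant $K_q$ there bounds $\|\partial E/\partial\mathbf{q}_i^*\|_g=d_g(\mathbf{p}_i,\mathbf{q}_i^*)=\sqrt{2E}$, which is what you are trying to bound. Instead use $|\xi|\le\sqrt{2E}\,\|\dot{\mathbf{q}}_i^*\|_g$. This gives $\frac{d}{dt}d_g(\mathbf{p}_i,\mathbf{q}_i^*)\le-\kappa_p d_g(\mathbf{p}_i,\mathbf{q}_i^*)+\|\dot{\mathbf{q}}_i^*\|_g$, hence $d_g(\mathbf{p}_i(t),\mathbf{q}_i^*(t))\le d_g(\mathbf{p}_i(0),\mathbf{q}_i^*(0))+\int_0^\infty\|\dot{\mathbf{q}}_i^*\|_g\,dt$. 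The right side is finite because $\mathbf{q}_i^*(t)$ stays in a compact interior set, and no a priori bound on $E$ is needed.

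The cut-locus issue you flag is not addressed by the paper either. The paper simply treats $E$ as smooth, and its Lemma~\ref{Euni} stops at the need to bound $\ddot{\mathbf{q}}_i^*$. So relative to the paper you are missing nothing there, though your proposed remedies remain sketches.
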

\begin{proof}
It follows from Lemma~\ref{Elim} and Lemma~\ref{Euni} that $E(t)$ converges to a finite limit as time goes to the infinity and $\dot{E}(t)$ is uniformly continuous. In light of Barbalat's lemma, one has $\lim_{t\to+\infty}\dot{E}(t)=0$, which leads to $\lim_{t\to+\infty}\|\nabla_{p_i}E\|_g^2=0$ due to $\lim_{t\to+\infty}\langle\mathbf{\nabla_{q_i}E,\dot{q}^{*}_i}\rangle_g=0$.
Since $\|\nabla_{p_i}E\|_g^2=d^2_g(\mathbf{p}_i(t),\mathbf{q}^{*}_i(t))$, one gets 
$$
\lim_{t\to+\infty}d_g(\mathbf{p}_i(t),\mathbf{q}^{*}_i(t))=0.
$$ 
According to the triangle inequality 
$$
0\leq d_g(\mathbf{p}_i(t),\mathbf{q}^{*}_i)\leq  d_g(\mathbf{p}_i(t),\mathbf{q}^{*}_i(t))+d_g(\mathbf{q}^{*}_i(t),\mathbf{q}^{*}_i)
$$
and $\lim_{t\to+\infty}\mathbf{q}^{*}_i(t)=\mathbf{q}^{*}_i$, one has 
$$
\lim_{t\to+\infty}d_g(\mathbf{p}_i(t),\mathbf{q}^{*}_i)=0,
$$ 
which indicates $\lim_{t\to+\infty}\mathbf{p}_i(t)=\mathbf{q}^{*}_i$.
The proof by contradiction is employed for the collision avoidance with region boundaries.  Assume the $i$-th agent collides with the region boundary $\partial\Omega$ at some time $t_c$, which implies $\mathbf{p}_i(t_c)\in \partial\Omega$ and $h(\mathbf{p}_i(t_c))=0$. According to the definition of Riemannian metric $g$, its components $g_{ij}$ goes to the infinity as the agent approaches the boundary (i.e., $\mathbf{p}_i(t)\to\mathbf{p}_i(t_c)$).
Then the path length of the $i$-th agent during the time interval $[0,t_c]$ is given by
$$
L=\int_{0}^{t_c} \|\mathbf{\dot{p}}_i(\tau)\|_gd\tau=\int_{0}^{t_c} \frac{\|\mathbf{\dot{p}}_i(\tau)\|_2}{h(\mathbf{p}_i)}d\tau.
$$
Because of 
$\dot{h}(\mathbf{p}_i)=\nabla{h}^T\dot{\mathbf{p}}_i$,
one gets
$$
|\dot{h}(\mathbf{p}_i)|=|\nabla{h}^T\dot{\mathbf{p}}_i|\leq\|\nabla{h}\|_2\cdot\|\dot{\mathbf{p}}_i\|_2.
$$
Since $h(\mathbf{q})$ is continuously differentiable on the closed region $\Omega$, there exists $c_h>0$ such that $\|\nabla{h}\|\leq c_h$,
which leads to
$$
-\dot{h}(\mathbf{p}_i(\tau)) \leq|\dot{h}(\mathbf{p}_i(\tau))|\leq c_h\|\mathbf{\dot{p}}_i(\tau)\|_2.
$$
Thus, it follows that
\begin{equation*}
\begin{split}
L&=\int_{0}^{t_c} \frac{\|\mathbf{\dot{p}}_i(\tau)\|_2}{h(\mathbf{p}_i)}d\tau \\
&\geq -\frac{1}{c_h}\int_{0}^{t_c} \frac{\dot{h}(\mathbf{p}_i(\tau))}{h(\mathbf{p}_i(\tau))} d\tau \\
&=\frac{1}{c_h}\int_{t_c}^{0}\frac{dh(\mathbf{p}_i(\tau))}{h(\mathbf{p}_i(\tau))}=\frac{1}{c_h}\ln h(\mathbf{p}_i(\tau))\Big|_{t_c}^{0}\longrightarrow+\infty.
\end{split}    
\end{equation*}
This indicates that the length of any path that touches the region boundary is infinite large. However, an agent with the finite velocity can never travel an infinite distance in a finite time $t_c$, which  contradicts with our assumption and indicates the false assumption.
\end{proof}

\begin{remark}
If $\mathbf{q}_i^*$ is on the boundary of coverage region, a control law should be redesigned to drive the $i$-th agent towards the closest position on the region boundary, and then it moves along the boundary to approach the target point $\mathbf{q}_i^*$.  
\end{remark}


\section{Case Study}\label{sec:case}

Case studies are carried out in this section to validate the proposed coverage algorithm through numerical simulations.
\begin{figure}[t!]
\center
\subfloat{\includegraphics[width=0.50\linewidth]{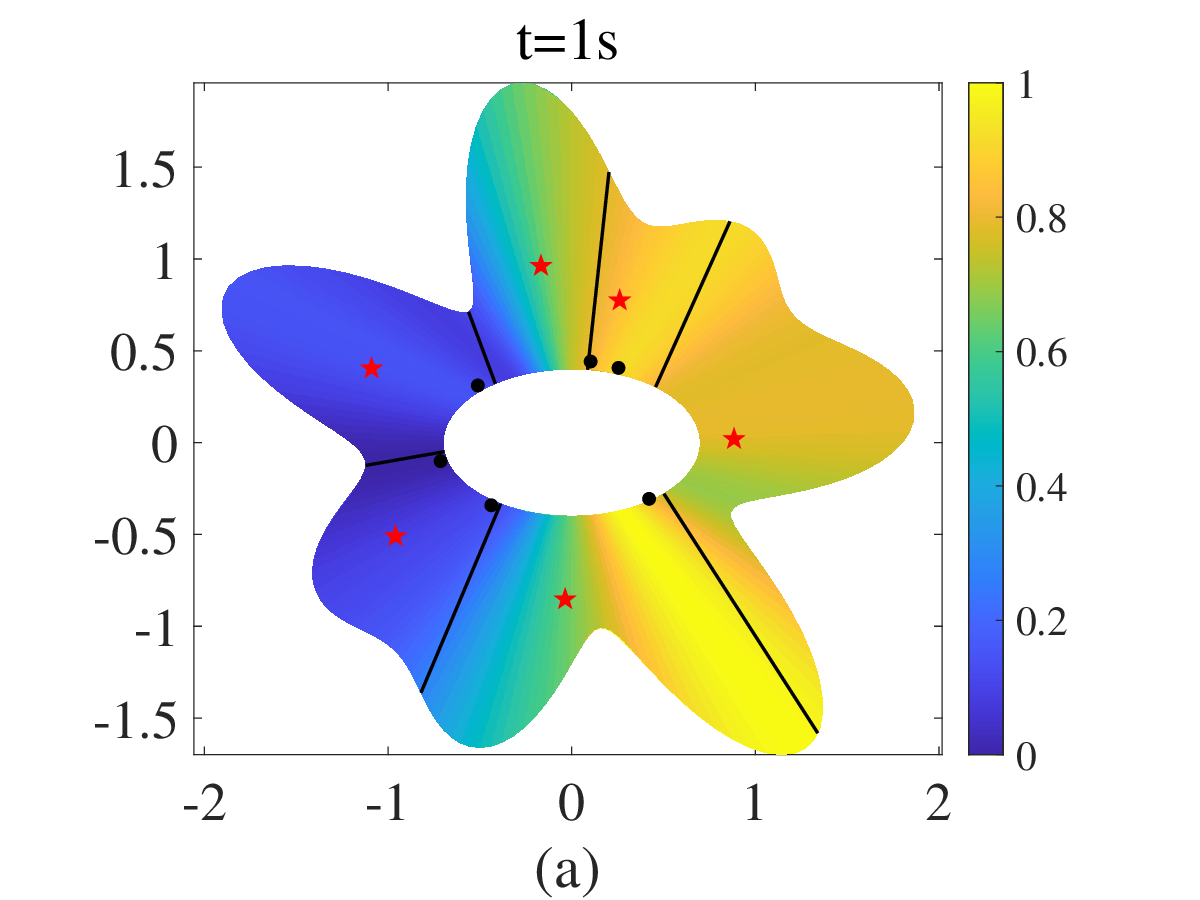}}
\subfloat{\includegraphics[width=0.50\linewidth]
{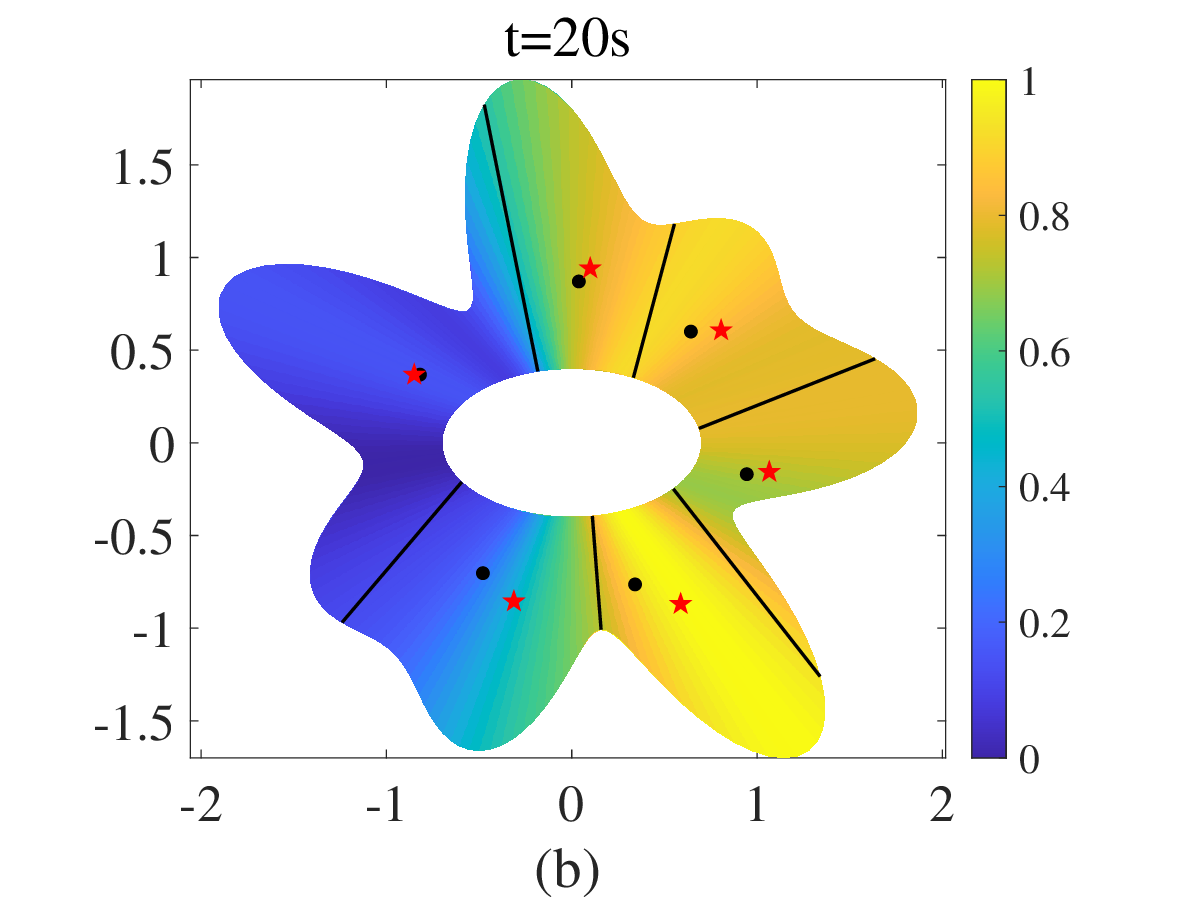}} \hfill
\subfloat{\includegraphics[width=0.50\linewidth]{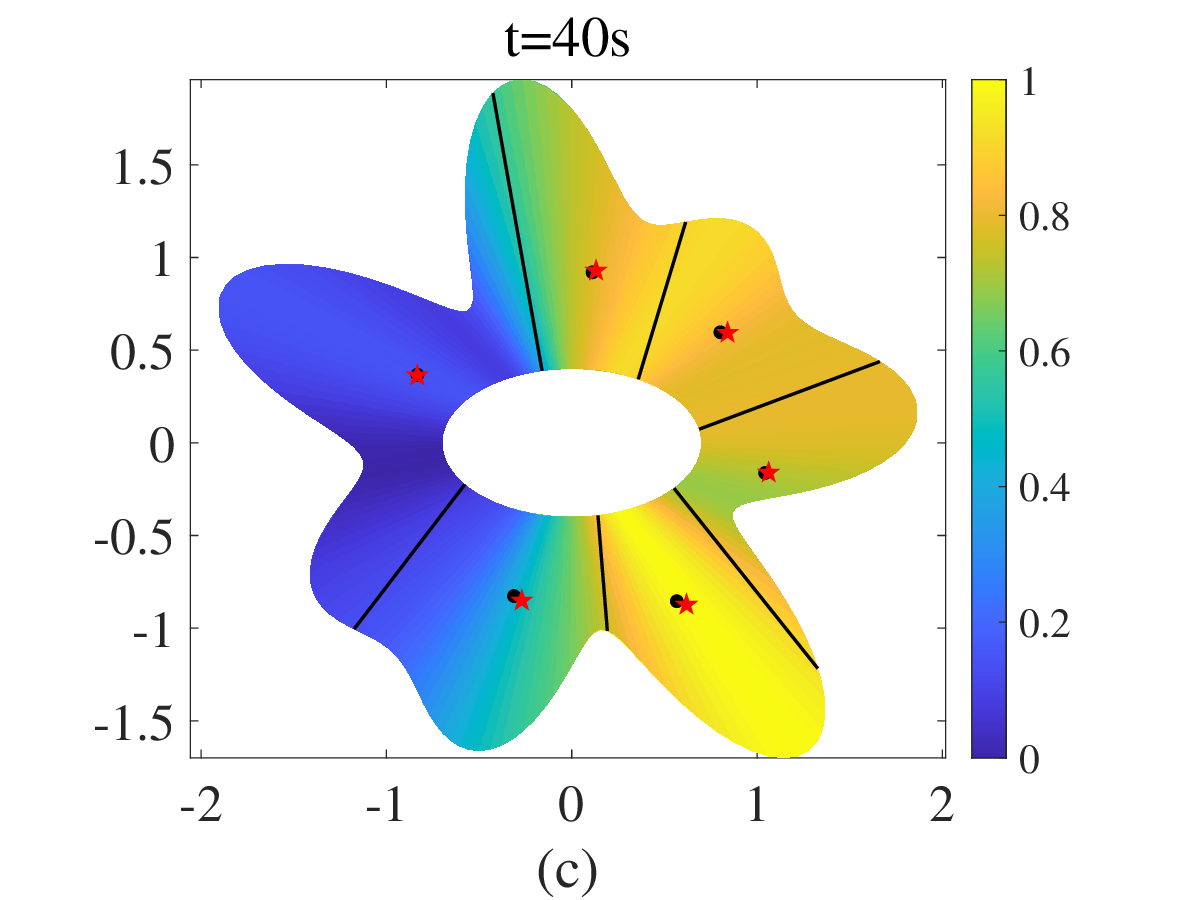}}
\subfloat{\includegraphics[width=0.50\linewidth]{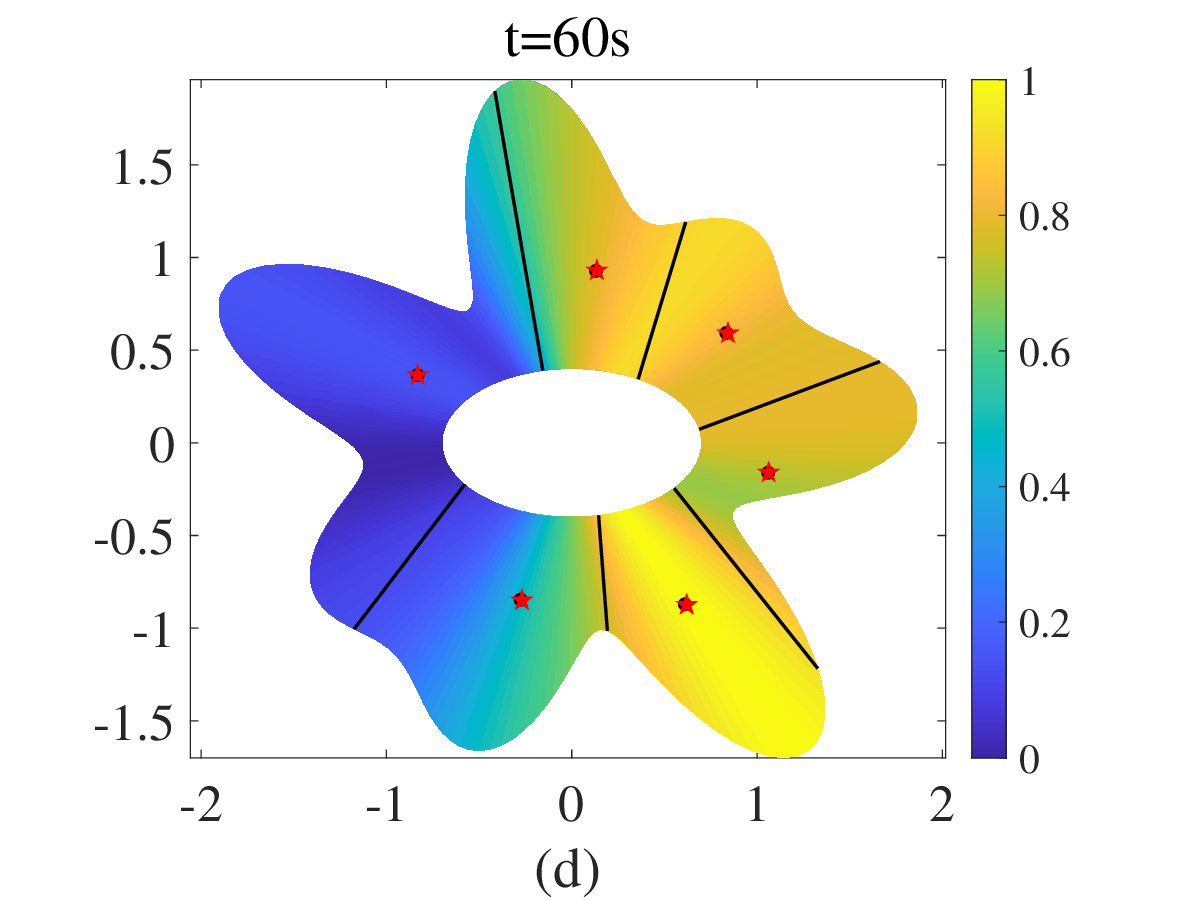}}\caption{\label{fig:main} Snapshots illustrating the simulation results of the MAS, showing the region partitions, subregion centroids, and agent positions. The mobile agents are marked by black points, the centroids of the subregions are represented by red stars.}
\end{figure}
In the simulation, a team of $6$ robots moves within a $4~\mathrm{m} \times 3.6~\mathrm{m}$ area. 
Due to the space limitation of the experimental testbed, two closed curves with band-shaped regions and internal holes are used. These curves are described by the polar equations 
$R_{\mathrm{in}}(\theta) = 1 / \sqrt{\cos^2\theta/0.49+\sin^2\theta/0.16}$ and $R_{\mathrm{out}}(\theta)  = 1.5+0.3\sin5\theta 
+0.3 \cos7\theta$. The workload density over the coverage area is described by the function
$\rho(r,\theta)=\text{exp}(\sin^2 \theta + \cos\theta)+0.01r$.
Note that the hole regions refer to obstacles that block the movement of the robots. 
The robots, following our improved control law, can effectively avoid collisions with the environment boundaries and obstacles, 
and successfully reach the target points of their respective subregions. 
Other parameters are chosen empirically as $\kappa_p = 0.1$ and $\kappa_s = 0.02$.
Each agent moves towards its corresponding optima while performing the rotary pointer partition in order to achieve an even distribution of the workload. 
The evolution of the optima is dependent  on the positions of partition bars. 
As shown in Fig.~\ref{fig:main},
the coverage region \( \Omega \) is nonconvex, where black points denote the mobile agents, and red stars represent the optima of subregions.
In the simulation, the initial positions of agents and virtual partition bars are randomly initialized.
At $t = 20\,\text{s}$, although the agents have not yet fully reached the optima of their respective subregions, 
the region partitioning is nearly complete. 
By $t=40\,\text{s}$, partition bars have converge, and the workload is accordingly partitioned. The agents are very close to the optima of their respective subregions. 
As shown in Fig.~\ref{fig:main}, each agent eventually arrives at the optima of its respective subregion, and the area of each subregion is determined by the workload density. The simulation results indicate that the proposed coverage control algorithm is stable and effective, and all agents successfully reach the optima of their respective subregions, thereby achieving a balanced workload partition while simultaneously optimizing the entire coverage performance.

\section{Conclusion}\label{sec:con}
This paper aimed to address the coverage control problem of multi-agent systems in non-convex annulus environment, and a distributed control algorithm was proposed to deploy the agent for optimizing the coverage performance while dividing the coverage region into subregions and achieving workload balance among subregions. Collision avoidance with region boundary was guaranteed by leveraging a Riemannian metric to navigate each agent in its subregion.  Future work may include the extension of the present study to dynamic uncertain environment with non-holonomic constraints.



\end{document}